\documentclass[letterpaper]{amsart}
\pdfoutput=1

\usepackage[T1]{fontenc}        % modern font encoding
\usepackage[utf8]{inputenc}     % ensure UTF-8 input (redundant for Lua/XeLaTeX)
\usepackage[english]{babel}     % correct hyphenation etc.
\usepackage{lmodern}            % improved Latin Modern fonts
\usepackage{wrapfig}
\usepackage{multicol}
\usepackage{multirow}
\usepackage{listings}
\usepackage{capt-of}

\usepackage{amsmath,amssymb,amsthm,mathtools,bm}
\usepackage[all]{xy}

\usepackage[margin=1in]{geometry}
\usepackage{xcolor,graphicx,xspace}
\usepackage{booktabs}

\usepackage[
  colorlinks=true,
  linkcolor=blue!60!black,
  citecolor=green!40!black,
  urlcolor=blue!60!black
]{hyperref}

\usepackage[nameinlink,capitalize]{cleveref}
\crefname{figure}{Figure}{Figures}

\usepackage[style=alphabetic, backend=bibtex]{biblatex}
\definecolor{bg}{rgb}{0.95,0.95,0.95}

\newcommand\OSCAR{\texttt{OSCAR}\xspace}

\newcommand{\OO}{\mathcal{O}}

\newcommand\QQ{\mathbb{Q}}
\newcommand\RR{\mathbb{R}}
\newcommand\CC{\mathbb{C}}
\newcommand\ZZ{\mathbb{Z}}
\newcommand\NN{\mathbb{N}}
\newcommand\PP{\mathbb{P}}
\newcommand\rank{\mathrm{rank}}

\newcommand\End{\mathrm{End}}

\newcommand\Spec{\mathrm{Spec}\,}
\newcommand\Hom{\mathrm{Hom}}
\newcommand\Ext{\mathrm{Ext}}
\newcommand\Cl{\mathrm{Cl}}

\theoremstyle{definition}
\newtheorem{theorem}{Theorem}[section]

\newtheorem{proposition}[theorem]{Proposition}
\newtheorem{lemma}[theorem]{Lemma}

\newtheorem{remark}[theorem]{Remark}
\newtheorem{example}[theorem]{Example}

\hypersetup{
  pdftitle={Computing \texorpdfstring{$A$}{A}-resultants via direct images}
  pdfauthor={Friedemann Groh, Matthias Zach},
  pdfsubject={Computational algebra and geometry}
}

\title{\Large Computing $A$-resultants via direct images}
\date{}

\author{Friedemann Groh}
\address{F. Groh: ISG Industrielle Steuerungstechnik GmbH, 
STEP, Gropiusplatz 10, 
70563 Stuttgart, Germany}

\author{Matthias Zach}
\address{
  M. Zach: 
  RPTU Kaiserslautern-Landau,
  Gottlieb-Daimler-Stra{\ss}e, Geb. 48,
  67663 Kaisers\-lautern,
  Germany 
}

\begin{document}

\maketitle

\begin{abstract}
  We improve a previously known theoretic method to compute $A$-resultants for suitable 
  monomial support 
  sets due to Weyman to the extent that it becomes computationally feasible 
  and effective. 
  This is achieved by introducing a new algorithm for the computation of direct images 
  of complexes of coherent sheaves on toric varieties. 
  The procedure does not rely on Gr\"obner basis computations at 
  any stage. 
\end{abstract}

\tableofcontents
\noindent

\section{Results}

For a finite set $A \subset \ZZ^m$ we denote the set of Laurent polynomials 
$f = \sum_{\nu \in A} a_\nu z^\nu$ with support in $A$ by $\CC^A$. 
Let $A_1,\dots, A_n \subset \ZZ^m$ be $n$ such sets and consider the system 
\begin{eqnarray}
  \label{eqn:PolynomialSystem}
  F &=& \left\{
    \begin{array}{ccc}
      f_1 &=&\sum_{\nu \in A_1} a_{1, \nu} \cdot z_0^{\nu_0} \cdot z_1^{\nu_1} \cdots z_m^{\nu_m} = 0\\
      & \vdots & \\
      f_n &=&\sum_{\nu \in A_n} a_{n, \nu} \cdot z_0^{\nu_0} \cdot z_1^{\nu_1} \cdots z_m^{\nu_m} = 0.
    \end{array}
    \right.
\end{eqnarray}
We write
\begin{equation}
  \label{eqn:DefinitionDiscriminant}
  \nabla_{A_1,\dots,A_n} = 
  \overline{
    \left\{
      (f_1, \dots, f_n) \in \CC^{A_1} \times \dots \times \CC^{A_n} : 
      f_1(z) = \dots = f_n(z) = 0 \textnormal{ for some } z \in (\CC^*)^{m}
    \right\}
  }
\end{equation}
for the Zariski closure of the set of parameters $(a_{j, \nu})_{j, \nu}$ 
with a non-empty solution set. 
In the case $n = m+1$ and under the additional assumptions that 
\begin{itemize}
  \item every $A_j$ generates $\RR^m$ and 
  \item all the $A_j$ together generate $\ZZ^m$
\end{itemize}
it is known that the discriminant is an irreducible, reduced hypersurface; 
cf. \cite[Chapter 8.1, Proposition 1.1]{GelfandKapranovZelevinsky94}.
Its defining polynomial 
\[
  \Delta_{A_1, \dots, A_{m+1}} \in \ZZ[a_{i, \nu} : i = 1, \dots, m+1, \, \nu \in A_i]
\]
is defined over the integers and unique up to sign. It is called the 
$A$-resultant for the support sets $A_1, \dots, A_{m+1}$. We discuss 
subsequent generalizations of the setup in Section \ref{sec:BroadeningScope}. 

\medskip
The achievement of this paper is to render a previously described 
theoretic approach 
due to Weyman \cite{Weyman94} 
for the computation of $A$-resultants algorithmic via 
the introduction of the \textit{Weyman functor} and its practical 
implementation. 

\medskip
The straightforward method to compute the $A$-resultant (or more generally 
the \textit{eliminant}, cf. Section \ref{sec:BroadeningScope} below) 
is to use an elimination 
ordering on the monomials of the polynomial ring 
\[
  \QQ[x_1,\dots, x_m, a_{i, \nu} : i = 1, \dots, m+1, \, \nu \in A_i]
\]
and compute a Gr\"obner basis 
which eliminates the variables $x_1,\dots, x_m$ from the ideal generated 
by the system (\ref{eqn:PolynomialSystem}). In practice, however, this fails 
already for reasonably sized examples due to the complexity of Buchberger's algorithm 
which is double exponential in the number of variables. What this means in practice 
can for instance be observed in the examples discussed in Sections \ref{sec:SturmfelsExample} 
and \ref{sec:ParametricImages}. 

To explain our angle of attack in a bit more detail: 
Gelfand, Kapranov, and Zelevinsky discuss 
a method to compute the $A$-resultant as the \textit{determinant of a complex of global sections} 
for suitable stable twists 
\cite[Chapter 2.2]{GelfandKapranovZelevinsky94}, 
as the \textit{determinant of a spectral sequence} 
\cite[Chapter 2.5]{GelfandKapranovZelevinsky94},
or from the \textit{Weyman complex} \cite{GelfandKapranovZelevinsky94}, 
\cite{Weyman94}. 
Picking up on Weyman's the ideas, we develop a general method 
for the computation of direct images 
of complexes of coherent sheaves on toric varieties via the \textit{Weyman functor}. 
This leads to a concrete algorithm with an efficient implementation to compute $A$-resultants 
via Equation (\ref{eqn:DeterminantOfWeymanComplex}) below. 
It allows for computations on a whole different 
level compared to elimination and 
our algorithm does not rely on Gr\"obner basis computations at any stage. 

As a natural byproduct we obtain a description of the variety 
$\nabla_{A_1,\dots, A_{m+1}}$ as the \textit{degeneracy locus} of a complex 
of free modules which often breaks down, at least locally, to the vanishing 
of a \textit{single} determinant of a square matrix. Compared with the method 
to compute the resultant as the determinant of a stably twisted complex of 
global sections 
(cf. \cite[Chapter 3, Theorem 4.2]{GelfandKapranovZelevinsky94}) this has 
the advantage that the complexes tend to be smaller. Moreover, in many 
cases of practical interest, it is not possible to arrive at a single 
matrix via \textit{stable} twisting while other twists in conjunction 
with our machinery lead to more favorable complexes; cf. Sturmfels' 
Example in Section \ref{sec:SturmfelsExample}.

\section*{Acknowledgments}

We would like to thank Martin Bies and Mi\c{k}elis E. Mi\c{k}elsons 
for numerous discussions on cohomology of toric line bundles and providing 
us with a preliminary implementation of $e_{\min}$ 
(see Equation (\ref{eqn:OptimalKFunction})). Further, we would like 
to thank Michael Joswig for introducing the authors to one another 
and Gregory Smith and David Eisenbud for pointing out references and 
feedback. Parts of this work have been carried out during a visit of the 
second named author at the Instituto de Matem\'aticas, Unidad Cuernavaca 
of the Universidad Nacional Aut\'onoma de M\'exico. 
This work has been funded by the Deutsche Forschungsgemeinschaft 
(DFG, German Research Foundation) – Project-ID 286237555 – TRR 195. 
Lastly, we would like to thank ISG Industrielle Steuerungstechnik GmbH in Stuttgart. 
The authors have not used any artificial intelligence products or services to produce or to assist 
with the production of this article or the related implementations. 

\subsection{Methods from the Green Book}

The methods for the computation of $A$-resultants discussed in 
\cite{GelfandKapranovZelevinsky94} 
proceed via the construction of a certain Koszul complex of coherent sheaves 
on a toric variety. 

For the given family of support sets $A_j \subset \ZZ^m$, $j=1, \dots, n$, let 
$P_j \subset \RR^m$ be the convex hull of $A_j$ and $Q$ the Minkowski sum 
of $P_1,\dots, P_{n}$. Each one of 
the $P_j$ appears as a face of $Q$ and gives rise to a divisor $D_j$
on the toric variety $X$ associated to $Q$.
This construction comes with an injective map 
\[
  \mathrm{res}_j \colon \CC^{A_j} \hookrightarrow V_j := H^0(X, \OO(D_j))
\]
We let $V = V_1 \times \dots \times V_n$ and denote the individual line 
bundles by $\mathcal L_i = \OO(D_i)$.

%and 
%\[
%  \Phi \colon X \to \PP (\CC^{A_p})^\vee \times \dots \times \PP (\CC^{A_p})^\vee
%\]
%the resulting morphism of projective varieties. 
The \textit{incidence variety} of common solutions of the linear systems for $V$ is 
\[
  W = \overline{\left\{(x, f_1,\dots, f_{n}) \in (\CC^*)^m \times \PP(V) : f_1(x) = \dots = f_n(x) = 0\right\}} \subset X \times \PP(V)
\]
with $(\CC^*)^m \subset X$ the natural embedding of the complex torus into $X$.
Since for every point $x \in (\CC^*)^m$ the condition on the coefficients 
$\{a_{i, \nu}\}$ of the $f_i$ given by evaluation at $x$ are linear and independent 
of one another, the incidence variety has the structure of a projective bundle 
of relative dimension $\dim V - n - 1$ 
over the complex torus. In particular, $W$ is irreducible and 
its codimension in $X \times \PP(V)$ is $n+1$. 

Since $X$ is complete, 
the projection of $W$ to $\PP(V_1) \times \cdots \times \PP(V_{n})$ 
is closed and hence its image and the affine cone over it
\begin{equation}
  \nabla_{\mathcal L_1, \dots, \mathcal L_{n}} = 
  \left\{(f_1,\dots, f_{n}) \in V_1 \times \dots \times V_{n} : 
  \{f_1 = \dots = f_{n} = 0\} \neq \emptyset
  \right\}
  \label{eqn:DiscriminantForLineBundles}
\end{equation}
are algebraic sets. It is easy to see from the definition 
(\ref{eqn:DefinitionDiscriminant}) and the fact that the complex 
torus is dense in any toric variety that (\ref{eqn:DiscriminantForLineBundles}) agrees with the $A$-discriminant. 

%We will make the additional assumption that $p = m+1$ and 
%\begin{itemize}
%  \item all the line bundles $\mathcal L_i = \OO(D_i)$ are very ample.
%\end{itemize}
\begin{proposition}(\cite[Proposition 3.1, Chapter 3.3]{GelfandKapranovZelevinsky94})
  \label{prp:DiscriminantIsAHypersurfaceLineBundles}
  When $n = m+1$ and all line bundles $\mathcal L_i$ are very ample, 
  the variety $\nabla_{\mathcal L_1,\dots, \mathcal L_{m+1}}$ is a 
  multihomogeneous irreducible hypersurface.
\end{proposition}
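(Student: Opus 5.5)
We will work with the incidence variety in its global form,
\[
  \widetilde W = \{(x, f_1,\dots,f_{m+1}) \in X \times V : f_1(x) = \dots = f_{m+1}(x) = 0\},
\]
and realize $\nabla_{\mathcal L_1,\dots,\mathcal L_{m+1}}$ as its image under the projection $\pi \colon X \times V \to V$. Irreducibility and the hypersurface property then reduce to two things: a dimension count for $\widetilde W$, and a check that $\pi|_{\widetilde W}$ is generically finite. Multihomogeneity is immediate, because the defining condition is invariant under rescaling each $f_j$ separately by an element of $\CC^*$. Hence the ideal of $\nabla$ is stable under the $(\CC^*)^{m+1}$-action, and it is generated by multihomogeneous elements.

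\emph{Step 1: $\widetilde W$ is an irreducible vector bundle over $X$.} Since each $\mathcal L_j$ is very ample, it is in particular globally generated. For every $x \in X$ the evaluation map $V_j \to \mathcal L_j|_x \cong \CC$ is therefore surjective, so the fibre of $\widetilde W \to X$ over $x$ is a linear subspace of $V$ of codimension exactly $m+1$. More precisely, $\widetilde W$ is the total space of the kernel bundle of the surjection $\mathcal O_X \otimes V \to \bigoplus_j \mathcal L_j$. It follows that $\widetilde W$ is irreducible of dimension $m + \dim V - (m+1) = \dim V - 1$. This version improves on the closure-of-the-torus description used above, because it holds over all of $X$, including the boundary. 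Since $X$ is complete, $\pi$ is proper, so $\nabla = \pi(\widetilde W)$ is closed and irreducible, with $\dim \nabla \le \dim V - 1$.

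\emph{Step 2: generic finiteness, which is the main obstacle.} We must show $\dim \nabla = \dim V - 1$, that is, that some point $(x,f) \in \widetilde W$ is isolated in its fibre $\pi^{-1}(f) \cap \widetilde W = \{x : f_1(x) = \dots = f_{m+1}(x) = 0\}$. This is where very ampleness, and not only global generation, is needed. I would fix a torus point $x_0$ and use that the sections in $V_j$ vanishing at $x_0$ separate tangent vectors there. Choose $f_1,\dots,f_m$ vanishing at $x_0$ whose differentials at $x_0$ are linearly independent; this is possible because each $V_j$ surjects onto the $1$-jets at $x_0$ of sections vanishing at $x_0$. Then $f_1 = \dots = f_m = 0$ defines, near $x_0$, a smooth curve germ transversal in the expected dimension. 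By Bertini-type genericity, for generic such choices their common zero locus is finite, since it is the intersection of $m$ generic members of base-point-free systems on an $m$-dimensional $X$. Next pick $f_{m+1}$ vanishing at $x_0$ but at none of the finitely many other common zeros, which again uses global generation. The fibre over $f$ is then finite. By upper semicontinuity of fibre dimension, the generic fibre of $\widetilde W \to \nabla$ is finite, so $\dim \nabla = \dim \widetilde W = \dim V - 1$.

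\emph{Conclusion.} $\nabla$ is a closed irreducible subvariety of $V$ of codimension one, and it is stable under the scaling torus. It is therefore the zero set of a single irreducible multihomogeneous polynomial, using that $\CC[V]$ is a UFD. The delicate point to verify carefully is the finiteness argument in Step 2. In particular, we have to make sure that the intersection of $m$ generic sections is zero-dimensional, which needs the generic members to meet properly. If a cleaner route is needed, I would instead argue that the differential of $\pi|_{\widetilde W}$ is injective at a suitable point $(x_0,f)$. Using the bundle description, this amounts to requiring that the $m+1$ sections $f_j$ at $x_0$ have $1$-jets whose linear parts span a hyperplane, a condition that very ampleness makes achievable.
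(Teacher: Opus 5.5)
Your overall plan works, and it differs from the route the paper points to. The paper gives no proof of its own. Citing \cite{GelfandKapranovZelevinsky94}, it notes that $W$ is irreducible as a bundle over the torus, and that very ampleness is used to show the projection $W \to \nabla$ is \emph{birational}. You prove only that $\widetilde W \to \nabla$ is generically finite, and that is all the statement needs. Step 1 (the full incidence variety as a kernel bundle over all of $X$, which needs only global generation) is fine. So are properness, the UFD argument, and torus invariance. You are also right that global generation alone need not give codimension one. Your version uses very ampleness only through separation of tangent vectors at a single torus point. What it does not give is $[K(W):K(\nabla)]=1$, i.e.\ that the resultant is the reduced equation of $\nabla$. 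That is the extra content of the birationality the paper attributes to GKZ, and the statement as phrased does not require it.

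Step 2 does not hold up as written, although the repair uses only what you already have. Suppose $f_j \in V_j$, $j=1,\dots,m$, vanish at the torus point $x_0$ with linearly independent differentials. Then in local trivializations they form a regular system of parameters of $\OO_{X,x_0}$. So near $x_0$ the locus $\{f_1=\dots=f_m=0\}$ is the reduced isolated point $x_0$, not a smooth curve germ.

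Your Bertini sentence is misapplied. The systems $\{f \in V_j : f(x_0)=0\}$ are not base-point-free, since $x_0$ is a base point. Their base locus is exactly $\{x_0\}$ only because very ample systems separate points. Likewise, choosing $f_{m+1}$ to vanish at $x_0$ but at no other common zero needs separation of points, not global generation.

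None of this is required. For \emph{any} $f_{m+1}$ vanishing at $x_0$, the point $x_0$ stays isolated in $\{f_1=\dots=f_{m+1}=0\}$, so $(x_0,f)$ is an isolated point of its fibre. Every component of every non-empty fibre of the dominant morphism $\widetilde W \to \nabla$ of irreducible varieties has dimension at least $\dim\widetilde W-\dim\nabla$. This already forces $\dim\nabla=\dim V-1$, with no global finiteness and no Bertini. Your ``cleaner route'' via injectivity of the differential of $\pi|_{\widetilde W}$ at $(x_0,f)$ is the same argument and is correct. The choice of $f_{m+1}$ avoiding the other zeros, using separation of points, is exactly what you would add to upgrade to GKZ's birationality.
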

\noindent
The defining equation $\Delta_{\mathcal L_1,\dots, \mathcal L_{m+1}}$ is referred to as 
the $(\mathcal L_1, \dots, \mathcal L_{m+1})$-resultant. The assumption 
on the $\mathcal L_i$ to be very ample is used in the proof of 
Proposition \ref{prp:DiscriminantIsAHypersurfaceLineBundles} to verify 
that the projection of $W$ to its image is birational, i.e. generically $1$-to-$1$.

\medskip
To derive a formula for the resultant, one considers the following Koszul 
complex on the product $X \times \PP(V)$
\begin{equation}
  \label{eqn:KoszulResolution}
  \mathcal K(\mathcal L_1, \dots, \mathcal L_n) : 
  \begin{xy}
    \xymatrix{
      0 \ar[r] &
      \bigwedge^n E^\vee \otimes \OO(-n) \ar[r]^-{s} &
      \bigwedge^{n-1} E^\vee \otimes \OO(-n+1) \ar[r]^-{s} &
      \cdots & \\ %\ar[r]^s &
      \cdots \ar[r]^-s &
      \bigwedge^{2} E^\vee \otimes \OO(-2) \ar[r]^-{s} &
      \bigwedge^{1} E^\vee \otimes \OO(-1) \ar[r]^-{s} &
      \OO \ar[r] & 
      0 \\
    }
  \end{xy}
\end{equation}
where $E = \mathcal L_1 \oplus \dots \oplus \mathcal L_n$ 
and the $i$-th exterior power in this complex is located in 
cohomological degree $-i$. The contraction 
morphism $s$ is defined as follows. Consider the natural maps 
\[
  V \otimes V^\vee \to H^0(X, E) \otimes H^0(\PP(V), \OO(1)) \to 
  H^0(X \times \PP(V), E \otimes \OO(1)).
\]
Then the image of the identity element in 
$\End(V) \cong V \otimes V^\vee$ provides the section $s$ in $E \otimes \OO(1)$, 
the dual of $E^\vee \otimes \OO(-1)$. 
Under the presumptions made in Proposition \ref{prp:DiscriminantIsAHypersurfaceLineBundles}, 
this complex is a free resolution of the structure sheaf 
$\OO_W$, seen as an $\OO_{X \times \PP(V)}$-module 
(cf. \cite[Chapter 1, Proposition 1.4, Chapter 2, Corollary 2.8]{GelfandKapranovZelevinsky94}).

It is discussed in \cite[Chapter 3, Theorem 4.2]{GelfandKapranovZelevinsky94}) that for $n = m + 1$ and 
a stably twisted version $\mathcal K(\mathcal L_1, \dots, \mathcal L_n)\otimes \mathcal M$ 
the resultant 
\begin{equation}
  \label{eqn:ResultantAsDeterminantStableTwist}
  \Delta_{\mathcal L_1, \dots, \mathcal L_n} = \det \left(
  H^0(X \times \PP(V), \mathcal K(\mathcal L_1, \dots, \mathcal L_n) \otimes \mathcal M)
  \right)
\end{equation}
is equal to the determinant of the complex of global sections.
If one wishes to avoid the stable twisting by $\mathcal M$ (which usually leads to a significant 
increase in dimension of the vector spaces involved), it is suggested 
in \cite[Chapter 2.5 and Chapter 3.4 C]{GelfandKapranovZelevinsky94} 
to pass to the determinant of the spectral sequence 
\begin{equation}
  \label{eqn:TheSpectralSequence}
  E_1^{p, q} = R^q\pi_* \left(\bigwedge^{-p} E^\vee \otimes \OO(-p) \otimes \mathcal M \right)
  \Longrightarrow 
  R^{p+q}\pi_* \left(\mathcal K(\mathcal L_1, \dots \mathcal L_n) \otimes \mathcal M\right)
\end{equation}
where $\pi \colon X\times \PP(V) \to \PP(V)$ is the projection to the second factor. 

As spectral sequences are notoriously difficult to compute, the authors then 
propose in Section \cite[Chapter 2.5 E and Chapter 3.4 E]{GelfandKapranovZelevinsky94} 
to use the \textit{Weyman complex} with terms
\begin{equation}
  \label{eqn:WeymanComplex}
  W^i = \bigoplus_{p + q = i} R^q \pi_* \left(\mathcal K^p(\mathcal L_1, \dots, \mathcal L_n) \otimes \mathcal M\right).
\end{equation}
The differentials of the Weyman complex, as discussed in \cite{GelfandKapranovZelevinsky94} 
and \cite{Weyman94}, rely on the non-constructive \textit{choice} of a certain map $\alpha$
out of a homotopy class of morphisms 
(cf. \cite[Chapter 2.5, Lemma 5.7]{GelfandKapranovZelevinsky94}).
The maps of the Weyman complex have been made explicit in some particular 
cases, e.g. for $X = \PP^n$ by D'Andrea and Dickenstein in \cite[Section 5]{DAndreaDickenstein01}; 
see Section \ref{sec:RelatedWork} for some more such examples. 
The method we propose here leads to \textit{natural} and \textit{algorithmic} 
construction of this $\alpha$ 
so that our Weyman complex becomes a somewhat canonical representative of 
$R\pi_* \left(\mathcal K(\mathcal L_1, \dots, \mathcal L_n) \otimes \mathcal M\right)$
on $\PP(V)$ up to quasi-isomorphism; cf. 
\cite[Chapter 2.5, Lemma 5.6]{GelfandKapranovZelevinsky94}. 

For $n = m + 1$ and monomial support sets $A_0, \dots, A_m$ satisfying 
the two assumptions made in the introduction, 
the discriminant can then be computed as the determinant of that complex, 
\begin{equation}
  \label{eqn:DeterminantOfWeymanComplex}
  \Delta_{A_0, \dots, A_m} = \Delta_{\mathcal L_0, \dots, \mathcal L_m} = 
  \det R\pi_* \left(\mathcal K(\mathcal L_0, \dots, \mathcal L_m)\otimes \mathcal M \right)
\end{equation}
\cite[Chapter 2, Theorem 5.5]{GelfandKapranovZelevinsky94}, but without the 
restriction that it would be known only up to multiplication with an undetermined 
non-zero constant. 

\subsection{Broadening the scope of applicability}
\label{sec:BroadeningScope}
The assumptions made on the support sets in the introduction have proven to be unnecessarily 
restrictive and have been weakened over the last years. We summarize some of these results 
to broaden the scope of applicability for our methods. We keep the notation 
from the previous section. 

\medskip
We will be interested exclusively in the case where $\nabla_{A_0,\dots,A_n}$ 
is a hypersurface so that $\dim W = \dim \nabla_{A_0, \dots, A_n}$. 
To distinguish this case from those with higher codimension, we can use 
the following combinatorial result of Sturmfels.

\begin{theorem}(\cite[Theorem 1.1]{Sturmfels93})
  Let $A_0, \dots, A_n \subset \ZZ^n$ be finite support sets. Then the codimension of 
  $\PP\nabla_{A_0,\dots,A_n} \subset \prod_{j=0}^{n} \PP(\CC^{A_j})$ equals 
  \[
    \max_{J \subset \{0, \dots, n\}} \left\{|J| - \rank 
    \left\{ \sum_{j \in J} \lambda_j \cdot v_j : v_j \in A_j, \, \lambda_j \in \ZZ, \, \sum_{j \in J} \lambda_j = 1 \right\}\right\}.
  \]
\end{theorem}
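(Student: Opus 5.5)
The plan is to prove the two inequalities separately by dimension counts on incidence varieties over tori, following Sturmfels' original argument. First a normalization. Multiplying $f_j$ by a Laurent monomial does not change its zero set in the torus. So the codimension of $\PP\nabla_{A_0,\dots,A_n}$ is unchanged under translating each $A_j$ individually, and we may assume $0 \in A_j$ for all $j$. Write $L_J \subset \ZZ^n$ for the lattice generated by $\bigcup_{j \in J} A_j$ and $r_J = \rank L_J$. I will read the rank in the statement as this $r_J$ (the affine lattice in the statement is $L_J$ after the normalization), and I will check this identification first. Set $d(J) = |J| - r_J$ and $d = \max_J d(J)$.

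For the inequality $\operatorname{codim} \geq d$, fix $J$. The polynomials $f_j$, $j \in J$, are pulled back from the torus $T_J = \Spec \CC[L_J]$ of dimension $r_J$ along the surjection $(\CC^*)^n \to T_J$. Hence $\nabla_{A_0,\dots,A_n} \subseteq \nabla_J \times \prod_{j \notin J} \CC^{A_j}$, where $\nabla_J$ is the closure of coefficient tuples $(f_j)_{j \in J}$ having a common zero in $T_J$. The incidence variety $W_J \subset T_J \times \prod_{j\in J}\CC^{A_j}$ is, as in the discussion of $W$ above, a vector bundle over $T_J$: evaluation at a point of $T_J$ imposes $|J|$ independent linear conditions because $0 \in A_j$. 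So $\dim W_J = r_J + \sum_{j\in J}|A_j| - |J|$, and the projection shows $\operatorname{codim}\nabla_J \geq |J| - r_J$. Taking the maximum over $J$ gives the lower bound.

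For the reverse inequality I will build a common solution explicitly. Choose $J$ with $d(J) = d$ and, among such, $|J|$ maximal. The key combinatorial consequence of maximality is that for every $I \subseteq \{0,\dots,n\}\setminus J$ we have $\rank(L_{J\cup I}/L_J) \geq |I|$; otherwise $J \cup I$ would have defect $\geq d$ with larger size. I then proceed in two steps.
\begin{itemize}
\item Show that $\nabla_J$ has codimension exactly $d$ in $\prod_{j\in J}\CC^{A_j}$. Equivalently, for generic $(f_j)_{j\in J}$ in $\nabla_J$ the common zero set in $T_J$ has dimension exactly $r_J - |J| + d = 0$, up to the torus directions forced by the lattice. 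This is a statement about the generic fibre of $W_J \to \nabla_J$.
\item Show that for such a generic point the remaining $f_j$, $j \notin J$, have a common zero on the fibre of $(\CC^*)^n \to T_J$ over it, for generic choices of their coefficients. The fibre is a torus of dimension $n - r_J$, and the inequalities $\rank(L_{J\cup I}/L_J)\geq |I|$ are exactly Bernstein's criterion that a generic system with the induced supports in the quotient lattice has nonzero mixed volume. Such a system therefore has solutions, which yields $\operatorname{codim} \leq d$.
\end{itemize}

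I expect the first step to be the main obstacle. Generic finiteness of the fibres of the incidence projection is the delicate point: if fibres had positive dimension, the defect would be realized on a smaller subset. I would handle this by induction on $|J|$. Pick $J' \subsetneq J$ minimal with $d(J') = d$, which makes $J'$ "essential", meaning $d(I) < d(J')$ for all proper subsets $I$ of $J'$. For an essential family, Bernstein's criterion applied to every proper subfamily shows that generic solutions are isolated, so $W_{J'} \to \nabla_{J'}$ is generically finite. The equations indexed by $J\setminus J'$ are then handled by the same quotient-torus argument as in the second step.
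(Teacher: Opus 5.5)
The paper gives no proof of this theorem; it only quotes Sturmfels. Your argument therefore has to stand on its own.

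\textbf{What is sound.} Reading the rank as $\rank L_J$ after translating $0$ into each $A_j$ is the right interpretation. The lower bound is correct. The quotient-torus step is also correct. For \emph{any} maximizer $J$ one has $\rank(L_{J\cup I}/L_J)\ge |I|$, so choosing $|J|$ maximal is unnecessary. After padding the $n+1-|J|$ restricted equations with $d-1$ generic full-dimensional ones, and passing to one component of the fibre if $\ZZ^n/L_J$ has torsion, the mixed-volume criterion gives $\nabla\supseteq\nabla_J\times\prod_{j\notin J}\CC^{A_j}$.

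\textbf{The gap: the essential case.} This case carries the whole upper bound whenever the full family is itself essential. Then Step 2 is vacuous and $J'=J$, so your ``$J'\subsetneq J$'' must read $\subseteq$. The sentence ``Bernstein's criterion applied to every proper subfamily shows that generic solutions are isolated'' is not an argument. Bernstein's criterion concerns \emph{generic} systems, whereas a point of $\nabla_{J'}$ is special. Moreover, for $d\ge 2$ proper subfamilies of an essential family can have positive defect, and then the criterion only says their generic members have no torus zeros at all. For example, $A_0=A_1=A_2=\{0,e_1\}\subset\ZZ^2$ is essential with $d=2$, and every pair is generically inconsistent. Likewise, the heuristic that positive-dimensional fibres would force the defect onto a smaller subset is unproved.

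\textbf{A fix by induction.} Let $J'$ be essential and $j\in J'$. Then $r_{J'\setminus j}=r_{J'}$, since otherwise $d(J'\setminus j)\ge d$. Hence $d(J'\setminus j)=d-1$, and this is the maximal defect among subsets of $J'\setminus j$.
\begin{itemize}
\item Take as induction hypothesis the full statement for arbitrary finite families of supports (any number of them, any rank). This gives $\mathrm{codim}\,\nabla_{J'\setminus j}=d-1$.
\item Adding one equation raises codimension by at most one. Indeed, over a dense set of $g\in\nabla_{J'\setminus j}$ having a torus zero $t$, the whole hyperplane $\{f_j(t)=0\}\subset\CC^{A_j}$ lies in the fibre of the dominant projection $\nabla_{J'}\to\nabla_{J'\setminus j}$.
\item Together with your lower bound this gives $\mathrm{codim}\,\nabla_{J'}=d$, which is equivalent to generic finiteness of $W_{J'}\to\nabla_{J'}$.
\item The base case $d=0$ is your Step 2 with $J=\emptyset$.
\end{itemize}

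\textbf{A fix via Bernstein.} If you prefer to stay with Bernstein, you need two ingredients that are absent from your sketch.
\begin{itemize}
\item A square subfamily $S\subseteq J'$ with $|S|=r_S=r_{J'}$ and $r_I\ge|I|$ for all $I\subseteq S$. It exists by a Rado--Edmonds argument, because $J'$ maximizes the defect.
\item A proof that $\nabla_{J'}$ projects dominantly onto $\prod_{j\in S}\CC^{A_j}$.
\end{itemize}
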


We will henceforth assume $\nabla_{A_0, \dots, A_n}$ to be of codimension one.

\medskip

For the definition of the $A$-resultant in this setting we follow 
\cite{DAndreaSombra15}. Since $\nabla = \nabla_{A_0, \dots, A_n} \subset V$
is an irreducible hypersurface, there exists a reduced defining equation 
$h \in \QQ[a_{i, \nu} : i = 0,\dots, n,\, \nu \in A_i]$ which is unique 
up to multiplication by a unit. This polynomial will be referred to as the 
\textit{eliminant} of the system. For the $A$-resultant we then consider 
the algebraic cycle given by 
\begin{equation}
  \label{eqn:DefinitionMultiplicityResultant}
  \pi_*([W]) = [K(W) : K(\nabla)] \cdot [\nabla]
\end{equation}
in the Chow group of $V$; see e.g. \cite{Fulton98}. By definition, the multiplicity 
$m = [K(W) : K(\nabla)]$
of this cycle along $\nabla$ is the transcendence degree of the respective function 
fields; a combinatorial formula for $m$, which is based directly on the support sets, was given in 
\cite[Proposition 3.13]{DAndreaSombra15}.
The $A$-resultant is then defined as any polynomial 
$\Delta_{A_0, \dots, A_n} \in \QQ[a_{i, \nu} : i = 0,\dots, n,\, \nu \in A_i]$ 
defining this cycle, 
e.g. $\Delta_{A_0, \dots, A_n} = h^m$. 

\medskip
We shall now provide a brief discussion of the following fact, 
as we are unaware of a concise exposition of this in the literature. 

\begin{proposition}
  \label{prp:BroaderApplicability}
  Over the complex numbers Equation (\ref{eqn:DeterminantOfWeymanComplex}) applies whenever 
  one has $n+1$ monomial support sets $A_0, \dots, A_n$ in $n$ variables and 
  $\nabla_{A_0, \dots, A_n}$ has codimension one.
\end{proposition}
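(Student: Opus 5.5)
We reduce the statement to the general determinantal formula for the direct image of a Koszul complex, \cite[Chapter 2, Theorem 5.5]{GelfandKapranovZelevinsky94}, after checking its hypotheses in the weakened setting. Let $X$ be the toric variety of the Minkowski sum $Q = P_0 + \dots + P_n$, with $\mathcal L_j = \OO(D_j)$. These line bundles are globally generated, and each $\mathcal L_j$ induces a morphism $X \to \PP(V_j^\vee)$. The key point is that no very-ampleness or lattice-generation hypothesis is needed. The determinant of $R\pi_*(\mathcal K \otimes \mathcal M)$ computes the divisor $\pi_*[\mathrm{supp}\,\mathcal H]$ of the cohomology of the Koszul complex, counted with the correct multiplicities. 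This is exactly the cycle $\pi_*([W])$ of Equation (\ref{eqn:DefinitionMultiplicityResultant}).

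The steps are as follows.
\begin{enumerate}
\item Since $\nabla$ has codimension one by Sturmfels' theorem, $\dim W = \dim \nabla$. Here $W$ is irreducible of codimension $n+1$ in $X \times \PP(V)$, being a projective bundle over the torus.
\item We show that the Koszul complex (\ref{eqn:KoszulResolution}) is exact off $W$ and resolves $\OO_{W'}$, where $W' = \{s = 0\}$ is the full zero scheme. We also need that $s$ is a regular section of $E \otimes \OO(1)$, i.e.\ that $W'$ has the expected codimension $n+1$. Global generation of each $\mathcal L_j$ implies that the fibre of $W' \to X$ over every point $x \in X$ is a linear subspace of codimension exactly $n+1$ in $\PP(V)$, not only over the torus. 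Hence $W'$ is a projective bundle over all of $X$: it is irreducible, equals $W$, is smooth and reduced, and $s$ is regular, so $\mathcal K$ resolves $\OO_W$.
\item Apply \cite[Chapter 2, Theorem 5.5]{GelfandKapranovZelevinsky94}, or more precisely the underlying Knudsen--Mumford statement that the determinant of a perfect complex whose cohomology is supported in codimension one equals the divisor of its cohomology. The complex $R\pi_*(\mathcal K\otimes\mathcal M)$ is a perfect complex on $\PP(V)$, and its determinant divisor is $\sum_i (-1)^i [R^i\pi_*(\OO_W\otimes\mathcal M)]$ in codimension one. Over the generic point of $\nabla$ the map $W \to \nabla$ is finite of degree $[K(W):K(\nabla)]$. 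Hence only $\pi_*(\OO_W\otimes\mathcal M)$ contributes there, with length equal to this degree times the rank of $\mathcal M$ on the generic fibre, which is $1$. This yields $\pi_*[W] = m[\nabla]$, i.e.\ $\Delta = h^m$ up to a constant.
\item Dispose of the remaining constant. Both sides are defined over $\QQ$, and GKZ's normalization on a generic specialization fixes the constant. Alternatively, the paper's convention accepts any polynomial defining the cycle, so the constant is immaterial.
\end{enumerate}

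The main obstacle is step 3: justifying that the generic fibre of $W\to\nabla$ is finite when $W \to \nabla$ need not be birational. Without very ampleness, $X \to \prod \PP(V_j^\vee)$ may contract curves, so we must exclude positive-dimensional generic fibres. This follows from $\dim W = \dim\nabla$ together with upper semicontinuity of fibre dimension, since the generic fibre dimension is $\dim W - \dim \nabla = 0$. We must also ensure that the higher direct images do not contribute at the generic point of $\nabla$. Finiteness of the generic fibre gives vanishing of $R^{>0}\pi_*$ over an open subset meeting $\nabla$, so this holds as well.
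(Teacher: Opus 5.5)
Your proposal is correct. It reaches the conclusion by a somewhat different, and cleaner, route than the paper.

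\textbf{The paper's route.} The paper works at a closed point $q$ of a dense open $U\subset\nabla$ over which $\overline\pi$ is a finite unramified covering. It proceeds in four steps:
\begin{enumerate}
\item It identifies the analytic stalk $(R\pi_*\mathcal K)_q$ with $\bigoplus_{i=1}^{m'}\OO_{V,q}/\langle h\rangle$ through a chain of quasi-isomorphisms.
\item It reads off $m'$ as the number of preimage points and then argues that $m'=[K(W):K(\nabla)]$.
\item It applies \cite[Appendix A, Theorem 30]{GelfandKapranovZelevinsky94} over $\OO_{V,q}$.
\item Only afterwards does it globalize, via the Nullstellensatz and acyclicity of $R\pi_*\mathcal K$ off $\nabla$.
\end{enumerate}

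\textbf{Your route.} You apply the same determinant--divisor theorem (Knudsen--Mumford, equivalently GKZ Appendix A, Theorem 30) directly at the height-one prime $\langle h\rangle$. There $(R^i\pi_*\OO_W)_{\langle h\rangle}=0$ for $i>0$, and $(\pi_*\OO_W)_{\langle h\rangle}\cong K(W)$ has length $[K(W):K(\nabla)]$. Since all cohomology is supported on the irreducible hypersurface $\nabla$, you get $\mathrm{div}(\det)=m[\nabla]$, and unique factorization gives $\det = c\,h^m$.

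\textbf{What each buys.} Your version is purely algebraic. It avoids the analytic detour and the paper's somewhat awkward globalization step. It also supplies justifications the paper only asserts:
\begin{itemize}
\item that the Koszul complex resolves $\OO_W$, via your projective-bundle argument over all of $X$;
\item that $\overline\pi$ is generically finite, from $\dim W=\dim\nabla$.
\end{itemize}
The paper's version, in exchange, makes $m$ concretely visible as the cardinality of a generic fibre.

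\textbf{Small precisions.}
\begin{itemize}
\item In step 2, $W$ is smooth over $X$, but not smooth outright unless $X$ is. Regularity of $s$ also needs one more word beyond the codimension count. Either invoke that normal toric varieties are Cohen--Macaulay, or note the local form of $s$. On a chart $U_\sigma$, let $m_\sigma$ be the vertex of $P_j$ corresponding to $\sigma$. The $j$-th component of $s$ is then $a_{j,m_\sigma}+\sum_{\nu\neq m_\sigma}a_{j,\nu}\chi^{\nu-m_\sigma}$, so the components can be taken as coordinates.
\item The same local formula shows what you actually need when the coefficient space is $\CC^{A_j}$ rather than $H^0(X,\mathcal L_j)$. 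That requirement is base-point freeness of the subsystem spanned by $A_j$. It holds because $A_j$ contains the vertices of $P_j$; global generation of $\mathcal L_j$ by itself only concerns the complete linear system.
\item The first option in your step 4 (a GKZ normalization fixing the constant) is not substantiated. The second option suffices: the paper's convention that $\Delta$ is any polynomial defining $\pi_*[W]$. The paper's own argument likewise determines the determinant only up to a unit.
\item Codimension one is a hypothesis, so Sturmfels' theorem is not needed in step 1.
\end{itemize}
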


\begin{proof}
The association of a toric variety $X$ and a set of base point 
free Cartier divisors $D_j$ to the support sets $A_j$ applies regardless of 
our particular assumptions; see \cite[Proposition 6.1.1 (b)]{CoxLittleSchenk10}.
Again, the Koszul complex (\ref{eqn:KoszulResolution}) 
provides a free resolution 
of the structure sheaf $\OO_W$ of the incidence variety. 
We have the following commutative diagram 
\[
  \begin{xy}
    \xymatrix{
      W \ar@{^{(}->}[r]^-\iota \ar[d]_{\overline \pi} &
      X \times V \ar[d]^\pi \\
      \nabla \ar@{^{(}->}[r]^-\kappa & V
    }
  \end{xy}
\]
where $\overline \pi$ denotes the restriction of $\pi$ to 
the incidence variety $W$ and its image. 

Since $\overline \pi$ is proper and generically finite, 
there exists a Zariski open 
subset $U \subset \nabla$ over which $\overline \pi$ is finite. 
Let $D \subset X$ be the complement of the complex torus $(\CC^*)^n \subset X$.
We subsequently shrink $U$ to contain no points which lay in the image 
of $(D \times V) \cap W$ or the image of the singular locus of $W$. 
Again, this set is non-empty and Zariski open. 
Considering $\nabla$ with its reduced structure, also the set of regular points 
is Zariski open and non-empty and we may replace $U$ by its intersection with 
the smooth locus of $\nabla$. Let $q \in U$ be a regular value of $\overline \pi$ 
restricted to the smooth locus of $W \cap ((\CC^*)^n \times V)$. Then locally around 
$q$ the map $\overline \pi$ is a finite covering morphism of smooth complex manifolds. 

We have multiple ways to describe the stalk at $q$ of the direct image of $\OO_W$
in the complex analytic category:
\begin{eqnarray*}
  \bigoplus_{i=1}^{m'}\OO_{V, q}/\langle h\rangle &\cong&
  \kappa_*\left(\bigoplus_{i=1}^{m'}\OO_{\nabla, q}\right)\cong
  \kappa_* \left(\bigoplus_{p \in \overline \pi^{-1}(\{q\})} \OO_{W, p}\right) \cong
  \left(R\kappa_* \left(R\overline \pi_* \OO_W\right)\right)_q \\
  &\cong& 
  \left(R(\kappa \circ \overline \pi)_* \OO_W\right)_q
  \cong
  \left(R(\pi \circ \iota)_* \OO_W\right)_q \\
  & \cong & 
  \left(R\pi_* \left(\iota_* \OO_W\right) \right)_q 
  \cong 
  \left(R\pi_* \mathcal K(\mathcal L_0, \dots, \mathcal L_n) \right)_q.
\end{eqnarray*}
In all these equalities $\cong$ denotes a quasi-isomorphism of complexes of stalks 
of coherent sheaves. Here we have used that $\iota$, $\kappa$, and $\overline \pi$ 
are finite and hence $R\kappa_* = \kappa_*$ for these maps. Furthermore, we need 
\cite[Proposition 2.3.3]{Dimca04} for $R(f \circ g)_* = Rf_* \circ Rg_*$ and 
we have expressed $\iota_*\OO_W$ as $\mathcal K(\mathcal L_0, \dots, \mathcal L_n)$ 
and $\kappa_*\OO_\nabla$ as $\OO_V/\langle h\rangle$ for the defining equation $h$ 
of $\nabla$ up to quasi-isomorphism. 

From this we can deduce several facts. First of all, 
$\left(\overline \pi_* \OO_W\right)_q$ is a free, finite $\OO_{\nabla, q}$-module 
of rank $m'$ equal to the number of points $p$ in the preimage of $q$. This 
also holds in the algebraic category and hence, localizing first at $q$ and 
subsequently at the generic point of $\nabla$, we see that 
$m' = [K(W) : K(\nabla)]$ equals the transcendence degree and thus the multiplicity 
of the $A$-resultant at $\nabla$. 

Second, we may compute the determinant of the right hand side over the ring 
$\OO_{V, q}$. According to \cite[Appendix A, Theorem 30]{GelfandKapranovZelevinsky94} 
the multiplicity of the determinant at $\nabla$ equals 
\[
  \mathrm{ord}_h\left(\det(R\pi_*\mathcal K(\mathcal L_0, \dots, \mathcal L_n)_q)\right) 
  = 
  \sum_i (-1)^i \mathrm{mult}_{\langle h\rangle}
  \left(R^i\pi_*\mathcal K(\mathcal L_0, \dots, \mathcal L_n)_q\right)
\]
where $\mathrm{mult}_{P} M$ is defined as the length of the localization $M_P$ of a module 
$M$ at a prime $P$ and $\mathrm{ord}_h f$ as the exponent of $h$ in a prime factorization 
of $f$. Turning towards the left hand side of the above chain of quasi-isomorphisms, 
we see that 
$\mathrm{mult}_h \left(
  \bigoplus_{i=1}^{m'}\OO_{V, q}/\langle h\rangle
  \right)
  = m'
$
and therefore 
\begin{equation}
  \label{eqn:MultiplicityOfDeterminant}
  \mathrm{ord}_h \left( \det \left(
  R^i\pi_*\mathcal K(\mathcal L_0, \dots, \mathcal L_n)_q\right)
  \right) = m'.
\end{equation}
In order to establish the same statement in the polynomial ring for 
$V$, we only have to observe that 
$\left(R\pi_*\mathcal K(\mathcal L_0, \dots, \mathcal L_n)\right)_{q'}$
is quasi-isomorphic to zero for points $q' \in V$ outside of $W$. 
Then the determinant $\delta$ of the polynomial direct image 
$R\pi_* \mathcal K(\mathcal L_0, \dots, \mathcal L_n)$ can not vanish at any of those 
points and the Nullstellensatz implies that the reduced equation $h$ 
for $\nabla$ is contained in the radical of $\langle \delta\rangle$.
Now the set $U \subset \nabla$ is dense and hence the minimal exponent $k$ 
such that $h^k \in \langle \delta\rangle$ has to coincide with $m'$. 

We leave the adaptation of these arguments to the general case with a non-zero 
twist by $\mathcal M$ to the reader.
\end{proof}

\subsection{The Weyman functor}

Let $X$ be a normal toric variety without torus factors and $\Spec R$ 
an affine algebraic scheme of finite type defined over $\mathbf k$. 
We will assume the latter to be either field or the ring of integers $\ZZ$. Denote the Cox 
ring of $X$ by $S$ and let $S_R := S \otimes_{\mathbf k} R$ be its relative version 
over $\Spec R$. 
Consider a right-bounded (co-)complex\footnote{In this paper we will 
only consider cocomplexes, but refer to them as complexes for brevity.} 
of finitely generated $S_R$-modules 
\begin{equation}
  \label{eqn:ComplexOfGradedModules}
  \begin{xy}
    \xymatrix{
      \cdots \ar[r] & 
      M^{p_0 - 2} \ar[r] & 
      M^{p_0 - 1} \ar[r] & 
      M^{p_0} \ar[r] & 
      0.
    }
  \end{xy}
\end{equation}
Such a complex sheafifies in a natural way to a complex of 
coherent sheaves $\widetilde M^i$ on $X \times \Spec R$. 
Let $\pi \colon X \times \Spec R \to \Spec R$ be the projection to the 
second factor. Then $R \pi_* \widetilde M^\bullet$ is a right-bounded complex of coherent 
sheaves on $\Spec R$, i.e. a complex of finitely generated $R$-modules. 

The \textit{Weyman functor} is a functor 
\[
  \mathbb W \colon C^b(\textnormal{free } S_R-\textnormal{mod}) \to 
  C^b(\textnormal{free } R-\textnormal{mod})
\]
from the category of right-bounded complexes of free, graded $S_R$-modules to the 
category of right-bounded complexes of free $R$-modules such that we have a 
commutative diagram of functors 
\begin{equation}
  \label{eqn:WeymanFunctorAsCompositionOfFunctorsDiagram}
  \begin{xy}
    \xymatrix{
      C^b(\textnormal{free } S_R-\textnormal{mod}) \ar[d]_{\widetilde \cdot} 
        \ar[r]^{\mathbb W} &
      C^b(\textnormal{free } R-\textnormal{mod}) \ar[d] \\
      D^b(\textnormal{Coh}(X \times \Spec R))\ar[r]^{R\pi_*} &
      D^b(R-\textnormal{mod})).
    }
  \end{xy}
\end{equation}
%\begin{equation}
%  \lambda \circ \mathbb W = R\pi_* \circ \widetilde{\cdot} \colon M^\bullet \mapsto 
%  \lambda (\mathbb W(M^\bullet)) \cong_{\textnormal{qis}} R\pi_* \widetilde M^\bullet
%  \label{eqn:WeymanFunctorAsCompositionOfFunctors}
%\end{equation}
%up to quasi-isomorphism where $\lambda \colon C^b(R-\textnormal{mod}) \to D^b(R-\textnormal{mod})$ is the natural map from the category of complexes into its derived category. 
Note that the requirement on the modules to be free is not really a restriction 
since up to homotopy 
we may replace any right-bounded complex of finitely generated $S_R$-modules 
by a graded free resolution thereof.
Given a complex of free graded $S_R$-modules $M^\bullet$ with 
terms, say, $M^p = \bigoplus_{j=1}^{r_p} S_R[-\alpha_{p, j}]$, 
the Weyman functor thus provides 
us with a natural representative for the direct image of the associated complex of 
coherent sheaves $\widetilde M^\bullet$ on along $\pi$ up to quasi-isomorphism.

The terms of the Weyman complex $\mathbb W(M^\bullet)$ are 
\begin{equation}
  \label{eqn:TermsOfWeymanComplex}
  \mathbb W^i(M^\bullet) = \bigoplus_{p + q = i} R^q \pi_* \widetilde M^p
  =\bigoplus_{p + q = i} \bigoplus_{j = 1}^{r_p} R^q \pi_* \OO(-\alpha_{p, j}), 
\end{equation}
i.e. the diagonals on the first page of the spectral sequence 
\[
  E_1^{p, q} = R^q \pi_* \widetilde M^p \Longrightarrow R^{p+q}\pi_* \widetilde M^\bullet.
\]
Note that the summands on the right hand side 
are necessarily free $R$-modules since, due to the 
product structure of $X \times_{\mathbf k} \Spec R$, we have 
\[
  R^q \pi_* \OO(-\alpha) \cong H^q(X, \OO(-\alpha)) \otimes_{\mathbf k} R
\]
for the finite dimensional vector spaces $H^q(X, \OO(-\alpha))$ over $\mathbf k$.

The construction of the morphisms 
$\phi^i \colon \mathbb W^i(M^\bullet) \to \mathbb W^{i+1}(M^\bullet)$ 
of the Weyman complex is more involved. The decomposition of the terms as 
direct sums endows $\phi^i$ with a block structure where 
\[
  \phi^{r}_{p, q} \colon R^{q} \pi_* \widetilde M^p \to 
  R^{q-r+1} \pi_* \widetilde M^{p+r}, \quad p+q=i
\]
and $\phi^{r}_{p, q} = 0$ for $r < 1$ and $r > \dim X$.
For $r = 1$ these maps coincide with those on the first page of the spectral 
sequence. For $r > 1$ the maps $\phi^{r}_{p, q}$ have their domain and codomain 
in the same positions as their analogs in the computation of the spectral sequence, 
but they are of a different nature. 
Either way, the representing matrix for $\phi^i$ will be of block lower triangular form. 
The details of the construction of these maps will be discussed in Section 
\ref{sec:DifferentialsInTheWeymanComplex}. 

\medskip
We have implemented the Weyman functor up to its value on morphisms of complexes 
in the computer algebra system \OSCAR \cite{OscarBook}, building on previous 
work \cite{Zach25} of the second named author. 
It should soon become available on the system's development branch.
All test runs in this article have been performed on a 64 bit architecture with 
an 12th Gen Intel(R) Core(TM) i5-1245U processor and 32 GB of RAM. 
We have worked with the current \OSCAR development branch on commit 
\verb|e48c5c5df6af70fea315afe6a2fe24a69961c95a| and julia 1.11.1 on a ubuntu desktop system.

\subsection{Synopsis to related work}
\label{sec:RelatedWork}

We believe that the construction of the Weyman functor as a means to 
compute direct images and sheaf cohomology should be of interest in its own 
right; probably even as a relevant competitor to recent work by Brown and 
Erman \cite{BrownErman21}. 

As for the $A$-resultants, our application of Weyman
functors continues the approach of \cite{GelfandKapranovZelevinsky94}, 
but it also falls into the scope of research going back to Canny and Emiris 
\cite{CannyEmiris93}, \cite{CannyEmiris95}, \cite{CannyEmiris00}, 
Sturmfels \cite{Sturmfels93}, and more 
recent contributions by Bus\'e, Checa, and Emiris 
\cite{ChecaEmiris24}, \cite{BuseCheca22}. These authors use a 
``hands-on'' constructive approach which does not refer to determinants of complexes and 
allows for more general support sets $A_i$ than those meeting the requirements in \cite{GelfandKapranovZelevinsky94}. 
However, their work can to a certain extent be interpreted within the geometric context 
used in this article. For instance, the shift of Newton-polytopes
by a vector with rational components introduced by Emiris and Canny can be understood as
a twist of the line bundle specified by the polytope with a reflexive bundle of rank one.
From our geometric point of view, they consider the differential of a complex of global sections 
in cohomological degree $-1$, where all higher sheaf cohomologies vanish as a result of the Canny-Emiris shift. 
Correspondingly, the
resultant is then recovered from the greatest common divisors of the differential's maximal minors; 
cf. \cite[Appendix A, Theorem 34]{GelfandKapranovZelevinsky94}. 

The approach, which is algorithmically realized by us, has the advantage that resultants can
be computed directly as determinant of a single complex or even one single matrix. Moreover, 
there are examples, in which the computation of determinants is the bottleneck 
(see e.g. Section \ref{sec:ParametricImages}), but not computing the direct image itself. 
In such cases it is certainly preferable to not be obliged to compute several minors 
and greatest common divisors, before arriving at the resultant. 
%However, it would be interesting to compare runtimes on benchmark examples. 
 
Cattani, Dickenstein, and Sturmfels \cite{Cattani1998} consider systems with Newton polytopes
that coincide up to scaling. They assume sufficiently generic, but fixed coefficients so that the system's
polynomials form a regular sequence in the Cox ring of the toric variety specified by the normal fan of the Newton 
polytopes. The resulting Koszul complex is twisted so that there is only one single higher sheaf cohomology
in the last term at the upper left of the first page of the spectral sequence (\ref{eqn:TheSpectralSequence}), 
which has rank one. In this special setting, the authors 
can construct the Weyman complex and the block for the differential, mapping this 
cohomology to the global sections in the bottom right term, is given
by the \textit{toric Jacobian} \cite{cox1994toricresidues}. 
The determinant of the Weyman complex yields the 
resultant, where higher multiplicities of the resultant may also occur. 
Our approach generalizes this result insofar as we
present an algorithm to determine the Weyman complex of systems with mixed support sets and
any twisted complexes with various higher sheaf cohomologies.
 
D'Andrea and Dickenstein \cite{DAndreaDickenstein01} consider systems of homogeneous polynomials.
They twist the associated Koszul complex on a projective space with any degree, thus incorporating
-- from our geometric point of view --
higher sheaf cohomology groups and non-trivial blocks of higher order in the differentials of 
the Weyman complex. 
However, these blocks are constructed from the homogeneous components of a Bézoutian, 
which is defined as in the classical Bézout formula for the resultant of two polynomials. 
%This is facilitated by relating the toric Jacobian and the B\'ezoutian for the specific ``critical degree''. 
This allows a Weyman complex to be described without direct reference to higher sheaf cohomologies. 

One may attempt to generalize these constructions. For instance, using a specific ``critical degree'' 
for the twist, the results \cite{Cattani1998} apply, which allows D'Andrea and Dickenstein to 
relate the toric Jacobian and the Bézoutian in the case of homogeneous polynomials. 
Cox and Dickenstein \cite{cox2004codimension}
consider polynomial systems which have mixed supports and apply various vanishing theorems 
for specific twists 
to arrive at a particularly simple first page of the the spectral sequence (\ref{eqn:TheSpectralSequence}),
which contains only one single non-trivial higher cohomology group. 
For a straightforward generalization of the toric Jacobian, the rank of this higher cohomology group would
have to be one, but this is not always the case. Therefore, we suspect that the use of 
Bézoutians in the construction of Weyman complexes cannot be generalized in an obvious way.
\section{An algorithmic construction of the Weyman complex}

The methods described in \cite{GelfandKapranovZelevinsky94} to compute $A$-resultants 
rely on coherent sheaves of modules on toric varieties. When the toric variety has 
no torus factors, these can be modeled via 
finitely generated graded modules over the Cox ring, cf. 
\cite[Propositions 5.3.9 and 6.A.4]{CoxLittleSchenk10}. 
We will first review the common methods 
to compute sheaf cohomology in this setup and then proceed to our construction 
of the Weyman functor and its implementation.

\subsection{Toric Varieties and {\v C}ech Cohomology}

%We let again $X$ be a toric variety with Cox ring $S$ and 
%$S_R = S\otimes_{\mathbf k} R$ for some finitely generated $\mathbf k$-algebra 
%$R$. 
Let $X$ be a toric variety with Cox ring $S$. 
For a finitely generated $S$-module $M$ we write 
\[
  \begin{xy}
    \xymatrix{
      0 \ar[r] & 
      \check C^0(M) \ar[r] &
      \check C^1(M) \ar[r] &
      \check C^2(M) \ar[r] &
      \dots
    }
  \end{xy}
\]
for the \v Cech complex associated to $M$ and its twists and the standard monomial 
generators $f_1,\dots, f_r \in S$ of the irrelevant ideal 
of $X$; cf. \cite[Section 9.5]{CoxLittleSchenk10}.
Every term in this complex is a graded $S$-module 
\[
  \check C^p(M) = \bigoplus_{0 < i_0 < \dots < i_p \leq r} M \otimes_{S} 
  S[f_{i_0}^{-1}, \dots, f_{i_p}^{-1}],
\]
but due to the localizations, these modules can not be assumed to be 
finitely generated. However, this complex can be written as a direct limit 
of complexes of finite $S$ modules as
\[
  \check C^\bullet(M) = \lim_{e \in \NN^r} \check C_{\leq e}^\bullet(M), 
  \qquad
  \check C^p_{\leq e}(M) = \bigoplus_{0 < i_0 < \dots < i_p \leq r} M \otimes_{S} 
  \frac{1}{f_{i_0}^{e_{i_1}} \cdots f_{i_p}^{e_{i_p}}}.
\]
The geometric \v Cech complex for the sheaf $\widetilde M$ on $X$ 
associated to $M$ is then given by the degree-$0$-part 
\[
  \check C^\bullet (\widetilde M) = \check C^\bullet(M)_0 = \lim_{e \in \NN^r} 
  \check C^\bullet_{\leq e}(M)_0,
\]
see \cite[Theorem 9.5.2]{CoxLittleSchenk10} and \cite[Theorem 9.5.7]{CoxLittleSchenk10}. 
In practice it is favorable to compute sheaf cohomology not through the \v Cech complex, 
but via graded strands of $\Ext$-modules as explained in 
\cite[Theorems 9.5.4 - 5]{CoxLittleSchenk10}.
For the latter we have a similar structure via direct limits 
induced by the natural inclusion $I^{[e]} \subset I^{[d]}$ for $e \geq d$ 
over the same index set $\NN^r$; see e.g. \cite{Hartshorne67}. 
In particular the existence of a morphism
$\mathrm{Kosz}(f_1^{e_1}, \dots, f_r^{e_r}) \to P^\bullet_{[e]}$
gives rise to a system of morphisms 
$\Hom(P^\bullet_{[e]}, M^\bullet) \to \check C^\bullet_{\leq e}(M^\bullet)$ 
for any free resolution $P^\bullet_{[e]}$ of the Frobenius power $I^{[e]}$ 
and this induces a quasi-isomorphism 
\[
  \lim_{e \in \NN^r} \Hom(P^\bullet_{[e]}, M)_0 \cong_{\textnormal{qis}} \check C^\bullet(M)_0
\]
with the \v Cech complex. 
See \cite[Lemma 9.5.8]{CoxLittleSchenk10} for the existence of truncations of these limits 
which capture the cohomology.
We will henceforth often work with the \v Cech complex, 
mostly for the convenience of notation and geometric accessibility. 
However, the reader should keep in mind that we may usually replace $C_{\leq e}^q(M^p)_0$ by 
$\Hom(P^q_{[e]}, M^p)_0$ in most arguments and we use the latter in 
most concrete computations. 

\medskip
For a product $X \times_{\mathbf k} \Spec R$ the above considerations carry 
over almost verbally to graded \textit{free} modules over the ring 
$S_R = S \otimes_{\mathbf k} R$ 
and the projection $\pi \colon X \times_{\mathbf k} \Spec R \to \Spec R$ 
via base change; 
cf. \cite[Theorem 2.3.36]{Dimca04}. The general case can then be reduced 
to complexes of free modules by choice of a Cartan-Eilenberg resolution. 

%We use strands of local cohomology modules to compute cohomology of 
%coherent sheaves on toric varieties; see e. g. \cite[Section 9.5]{CoxLittleSchenk10}. 
%It is explained in Theorem \cite[Theorem 9.5.2]{CoxLittleSchenk10} how the
%``local {\v C}ech complex'' relates to local cohomology modules and in 
%\cite[Theorems 9.5.4 - 5]{CoxLittleSchenk10} how $\Ext$-modules can be used to 
%compute these. The actual sheaf cohomology groups can then be extracted as graded 
%strands for the local cohomology at the ``irrelevant ideal'' 
%according to \cite[Theorem 9.5.7]{CoxLittleSchenk10}. Finally 
%\cite[Lemma 9.5.8]{CoxLittleSchenk10} assures the stabilization of the 
%strands of $\Ext$-modules. 

\medskip
The following theorem summarizes the results on direct images we shall need 
for our purposes.

\begin{theorem}
  \label{thm:DirectImageForToricVarities}
  Let $X$ be a toric variety without torus factors over a field $\mathbf k$ 
  and with Cox ring $S$. 
  Let $R$ be a commutative Noetherian ring and consider the direct product 
  \[
    \begin{xy}
      \xymatrix{
        & \tilde X = X \times_{\mathbf k}\Spec R \ar[dl]_\lambda \ar[dr]^\pi & \\
        X & & \Spec R
      }
    \end{xy}
  \]
  We set $S_R := S \otimes_{\mathbf k} R$ and $\tilde I = I\cdot S_R$ 
  for the irrelevant ideal $I = \langle f_1, \dots, f_r\rangle \subset S$ of $X$.
  Then for 
  any right bounded complex 
  \[
    M^\bullet : 
    \begin{xy}
      \xymatrix{
        \cdots \ar[r] &
        M^{p_0-2} \ar[r] &
        M^{p_0-1} \ar[r] &
        M^{p_0} \ar[r] &
        0 & 
      }
    \end{xy}
  \]
  of free graded $S_R$-modules 
  %$M_i = \bigoplus_{j=0}^{r_i} S_R[-\alpha_{j, k}]$ 
  and any homological degree $q$ 
  there exists $e \in \NN^r$ such that the complex
  \begin{equation}
    \label{eqn:TotalComplexStrand}
    \left(\mathrm{Tot}\left(\Hom(P_{[e]}^\bullet, M^\bullet)\right)\right)_0
  \end{equation}
  is quasi-isomorphic to the direct image $R\pi_*\left(\tilde M^\bullet\right)$ 
  of the complex of sheaves of $\mathcal O_{\tilde X}$-modules associated to 
  $M^\bullet$ from 
  degree $q$ onwards, where $P_{[e]}^\bullet$ is a graded resolution of the ideal 
  $\tilde I^{[e]} = \langle f_1^{e_1}, \dots, f_r^{e_r}\rangle$.
  %\footnote{For an ideal $I = \langle f_1, \dots, f_r\rangle$ 
  %we denote by $I^{[e]} = \langle f_1^e, \dots, f_r^e\rangle$ its Frobenius power.}
\end{theorem}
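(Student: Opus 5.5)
The plan is to reduce the statement to the single-module case, using the Čech complex description recalled above, and then pass to total complexes via a standard spectral sequence or truncation argument.

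\textbf{Step 1: a single free module.} Take $M = \bigoplus_j S_R[-\alpha_j]$. Since $\pi$ is affine-base-changed from $X \to \Spec \mathbf k$, flat base change gives
\[
R\pi_*\widetilde M \simeq \check C^\bullet(M)_0 = \check C^\bullet(\widetilde M).
\]
This uses the Čech complex of the affine toric cover of $\tilde X$ by the opens $\{f_i\neq 0\}\times\Spec R$. These opens are affine, so the Čech complex computes $R\pi_*$.

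\textbf{Step 2: passing to the finite stages.} We use
\[
\check C^\bullet(M)_0=\lim_e \check C^\bullet_{\leq e}(M)_0
\]
and the comparison morphisms $\Hom(P^\bullet_{[e]},M)\to \check C^\bullet_{\leq e}(M)$ induced by maps $\mathrm{Kosz}(f^{e})\to P^\bullet_{[e]}$. After passing to the limit, these give
\[
\lim_e \Hom(P^\bullet_{[e]},M)_0 \simeq \check C^\bullet(M)_0 .
\]
Here $\Hom(P_{[e]},M)$ computes $\Ext^\bullet(\tilde I^{[e]},M)$, whose colimit is local cohomology $H^{\bullet+1}_{\tilde I}(M)$. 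Its degree-$0$ strand is related to $H^\bullet(\tilde X,\widetilde M)$ through the exact sequence $0\to H^0_I\to M\to H^0\to H^1_I\to0$ together with the isomorphisms $H^{q}=H^{q+1}_I$ for $q\ge1$. One must be careful with the index shift, i.e. whether $\Hom(P_{[e]},M)$ is taken with $P_{[e]}$ resolving $S/\tilde I^{[e]}$ or $\tilde I^{[e]}$; we choose the convention matching the statement.

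\textbf{Step 3: stabilization of the degree-$0$ strand.} We invoke \cite[Lemma 9.5.8]{CoxLittleSchenk10}, after base change to $R$. Because $M$ is free of finite rank over $S_R$, the problem reduces to finitely many twists $S[-\alpha_j]$ over $\mathbf k$ tensored with the flat module $R$. For each such twist and each cohomological degree, there is an $e_0$ such that for all $e\ge e_0$ the natural map $\Ext^q(I^{[e]},S(-\alpha))_0\to H^q_{\ast}$ is an isomorphism onto the colimit. Taking the componentwise maximum of the finitely many $e_0$ involved handles each single module.

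\textbf{Step 4: the complex $M^\bullet$.} Only finitely many terms matter "from degree $q$ onwards". Since $X$ has dimension $d$, we have $R^s\pi_*=0$ for $s>d$, and the $\Hom(P_{[e]},-)$ complexes have length bounded by the projective dimension, which is at most $r$. Hence the terms $M^p$ with $p<q-r-1$ cannot contribute to cohomology in degrees $\ge q$, and we pick $e$ to be the maximum of the stabilization indices for the finitely many $M^p$ with $q-r-1\le p\le p_0$. The comparison maps are natural in $M$, so they assemble into a morphism of double complexes, and then into a morphism of total complexes. This morphism goes from $\mathrm{Tot}(\Hom(P_{[e]},M^\bullet))_0$ to the Čech total complex computing $R\pi_*\widetilde M^\bullet$. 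Filtering both sides by the $p$-degree gives spectral sequences whose $E_1$ pages are the columnwise cohomologies. The map on $E_1$ is an isomorphism in the relevant range, so the comparison theorem for bounded filtrations shows the map is a quasi-isomorphism in degrees $\ge q$.

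\textbf{Main obstacle.} The delicate point is Step 4. The map on $E_1$ is an isomorphism only in a range, and truncated terms in low $p$ still feed into the total degree $q-1$ differential. I would handle this by comparing the brutal truncations $\sigma_{\ge q-r-1}M^\bullet$ and checking that the cone is acyclic in degrees $\ge q$. The secondary worry is making the comparison morphisms $\mathrm{Kosz}\to P_{[e]}$ compatible across $e$, which is needed for naturality in Step 2. This compatibility holds up to homotopy, and that suffices for quasi-isomorphism statements.
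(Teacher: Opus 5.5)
Your proposal is correct and follows the paper's route. You reduce to the finitely many twists $S_R[-\alpha_{p,k}]$ relevant up to degree $q$, use stabilization of the degree-$0$ $\Ext$-strands for a single twist over $\mathbf k$, transfer to $R$ by flat base change, and take the componentwise maximum of the exponent vectors. The paper quotes the explicit bounds of \cite{EisenbudMustataStillman2000} and \cite[Theorem~9.5.10]{CoxLittleSchenk10} for the stabilization step, while you use the existence statement \cite[Lemma~9.5.8]{CoxLittleSchenk10}.

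Your column-filtration and brutal-truncation argument makes explicit what the paper compresses into ``relevant for the cohomology up to degree $q$''. One small fix: on the sheaf side, the window $p \geq q-r-1$ is justified by the vanishing of \v{C}ech cohomology above degree $r-1$ for the cover by the $r$ affine charts $\{f_i \neq 0\}$, not by $\dim X$ alone (e.g.\ $X = \mathbb{A}^d$ has $r=1$).
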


By ``quasi-isomorphic from degree $q$ onwards'' we mean that there exists a roof 
of morphisms of complexes whose induced maps in cohomology are isomorphisms 
in all cohomological degrees $\geq q$. 
Note that 
if the complex $M^\bullet$ is bounded, then we need not specify the homological 
degree $q$ and choose a global $e \in \NN^r$. 
%Moreover, by choice of a Cartan-Eilenberg resolution, this theorem 
%can be applied to an arbitrary right bounded complex $M^\bullet$ of finitely 
%generated graded $S_R$-modules. 

\begin{proof}
  We may assume $M^i = \bigoplus_{k=0}^{r_i} S_R[-\alpha_{i, k}]$ 
  for some degrees $\alpha_{i, k} \in \Cl(X)$. Explicit
  choices of $e$ for a single free module $S[-\alpha_{i,k}]$ are given in 
  \cite{EisenbudMustataStillman2000}; see also
  \cite[Theorem~9.5.10]{CoxLittleSchenk10}. 
  Note that taking direct images commutes with flat
  base change, so the results immediately carry over to 
  $S_R[-\alpha_{i, k}]$. Having this established, we may choose $e$ to be the
  component-wise supremum of
  all those bounds involved in the free $S_R$-modules which are relevant
  for the cohomology of the direct image up to degree $q$.
\end{proof}

While the complex (\ref{eqn:TotalComplexStrand}) is straightforward to write down, 
its size is usually too big to be of any practical use in computations. After all, 
every strand of degree $0$ has a monomial basis whose length tends to explode with 
an increasing number of variables of $S_R$ and degrees $-\alpha_{i, k}$ appearing in the 
$M^i$. In particular, when choosing a global $e$ as proposed in Theorem 
\ref{thm:DirectImageForToricVarities}, high degrees $\alpha_{i, k}$ for $i \ll p_0$ 
push the size of the monomial bases for terms coming from $M^{p_0}$ up very quickly. 

\medskip

\subsection{Gau{\ss} reduction for complexes}
\label{sec:SimplificationUpToHomotopy}
  Consider a complex $K^\bullet$ of free $R$-modules over a commutative ring $R$
  and assume that the matrix for the boundary map $\varphi$ in some homological 
  degree $p$ contains an invertible 
  submatrix $A \in R^{r \times r}$ for some rank $r \leq m, n$. Then we 
  can use Gau{\ss} reduction for the rows and columns to obtain a
  change of basis in domain and codomain so that $\varphi$ takes 
  a block diagonal form with respect to these bases. We obtain 
  the complex in the second row of the diagram below. 
  \[
    \begin{xy}
      \xymatrix{
        & & p & p+1 & \\
        K^\bullet : &
        \cdots \ar[r] & 
        R^m \ar[r] \ar@/^0.5pc/[d]^{+} 
        \ar[r]^{
          \left(
          \begin{array}{c|c}
            A & B \\
            \hline
            C & D
          \end{array}
          \right)
        } 
        &
        R^n \ar@/^0.5pc/[d]^{
          \left(
          \begin{array}{c|c}
            1 & \mp A^{-1}B \\
            \hline
            0 & 1
          \end{array}
          \right)
        }_{-}
        \ar[r]
        &
        \cdots \\
        &
        \cdots \ar[r] \ar[dr]&
        R^r \oplus R^{m-r} 
        \ar[r]_{
          \left(
          \begin{array}{c|c}
            A & 0 \\
            \hline
            0 & \tilde D % - CA^{-1}B
          \end{array}
          \right)
        } 
        \ar@/^0.5pc/[u]_{-}^{
          \left(
          \begin{array}{c|c}
            1 & 0 \\
            \hline
            \mp CA^{-1} & 1
          \end{array}
          \right)
        }
        \ar@/^0.5pc/[d]_\rho &
        R^r \oplus R^{n-r} \ar@/^0.5pc/[u]^{+} \ar@/^0.5pc/[d]^\rho 
        \ar[r]
        &
        \cdots \\
        \tilde K^\bullet : &
        &
        R^{m-r} \ar@/^0.5pc/[u]^\iota 
        \ar[r]_{\tilde D} &
        R^{n-r} \ar@/^0.5pc/[u]_\iota \ar[ur]
        & 
      }
    \end{xy}
  \]
  If we are only interested in the complex up to homotopy (or quasi-isomorphism), 
  we may split off the homologically trivial subcomplex
  $R^r \overset{A}{\longrightarrow} R^r$ to arrive at the complex $\tilde K^\bullet$ 
  given by the deviation through the bottom row with the matrix $\tilde D = D - CA^{-1}B$. 
  Denote the projection to and the inclusion of $\tilde K^\bullet$ by 
  \[
    \begin{xy}
      \xymatrix{
        K^\bullet \ar[r]^\rho & 
        \tilde K^\bullet, \ar@/^1pc/[l]^\iota
      }
    \end{xy}
  \]
  respectively. Then the block diagonal matrix 
  \[
    \left(
    \begin{array}{c|c}
      A^{-1} & 0 \\
      \hline
      0 & 0
    \end{array}
    \right)
    \colon R^n \to R^m
  \]
  yields a homotopy $h \colon K^\bullet \to K^{\bullet}[-1]$ so that 
  \[
    h \circ \varphi + \varphi \circ h = \mathrm{id} - \iota \circ \rho.
  \]
  Here we extend $h$ by the zero map in all homological degrees which are 
  not involved in the above reduction.

  Having explained the reduction for one single differential in a complex, 
  we may proceed with the complex $\tilde K^\bullet$ in place of $K^\bullet$ 
  and repeat the process until none of the differentials has a representing matrix 
  containing units. Note that, unless $R$ is local or suitably graded over a field, 
  this process can not be assumed to be deterministic; it involves several arbitrary 
  choices, including the order of the maps to be considered. 

\begin{example}
  \label{exp:CechComplexReduction}
  The main application for Gau{\ss} reduction of complexes in this article will be 
  on \v Cech complexes on toric varieties. Let $S$ be the Cox ring of a toric 
  variety $X$ with no torus factors over $\mathbf k = \QQ$. 
  Consider the module $M = S[-\alpha]$ 
  for some $\alpha \in \mathrm{Cl}(X)$ and let 
  $\check C^\bullet_{\leq e}(S[-\alpha])_0$ be its truncated \v Cech complex. 
  For $e_0 \in \NN^r$ sufficiently big we have 
  \[
    H^q(X, \mathcal O(-\alpha)) \cong 
    H^q\left(\check C^\bullet_{\leq e}(S[-\alpha]) \right)_0
    =: \check H^q_{\leq e}(S[-\alpha])_0
    \cong
    \check H^q_{\leq e}(S)_{\alpha}
  \]
  for every $e \geq e_0$. 
  Since we are working over a field $\QQ$, Gau{\ss} reduction of complexes 
  provides us with quasi-isomorphisms $\iota$ and $\rho$ in a commutative 
  diagram 
  \begin{equation}
    \label{eqn:SimplificationUpToHomotopyRank1Module}
    \begin{xy}
      \xymatrix{
        \cdots \ar[r]_-{\check \partial} &
        \check C_{\leq e}^{i-1}(S)_\alpha 
        %\Hom(P^{[e]}_{i+1}, S_R)_{\beta} 
        \ar[r]_-{\check \partial} \ar[d]^\rho \ar@/_1pc/[l]_h & 
        \check C_{\leq e}^{i}(S)_\alpha 
        %\Hom(P^{[e]}_{i+1}, S_R)_{\beta} 
        \ar[r]_-{\check \partial} \ar[d]^\rho \ar@/_1pc/[l]_h & 
        \check C_{\leq e}^{i+1}(S)_\alpha 
        %\Hom(P^{[e]}_{i+1}, S_R)_{\beta} 
        \ar[r]_-{\check \partial} \ar[d]^\rho \ar@/_1pc/[l]_h & 
        \cdots \ar@/_1pc/[l]_-h \\
        \cdots \ar[r]^-0 &
        \check H_{\leq e}^{i-1}(S)_\alpha 
        %\Hom(P^{[e]}_{i+1}, S_R)_{\beta} 
        \ar[r]^-{0} \ar[u]^\iota & 
        \check H_{\leq e}^{i}(S)_\alpha 
        %\Hom(P^{[e]}_{i+1}, S_R)_{\beta} 
        \ar[r]^-{0} \ar[u]^\iota & 
        \check H_{\leq e}^{i+1}(S)_\alpha 
        %\Hom(P^{[e]}_{i+1}, S_R)_{\beta} 
        \ar[r]^-{0} \ar[u]^\iota & 
        \cdots  \\
        \cdots &
        \check H_{\leq e_0}^{i-1}(S)_\alpha 
        %\Hom(P^{[e]}_{i+1}, S_R)_{\beta} 
        \ar[u]^\cong & 
        \check H_{\leq e_0}^{i}(S)_\alpha 
        %\Hom(P^{[e]}_{i+1}, S_R)_{\beta} 
        \ar[u]^\cong & 
        \check H_{\leq e_0}^{i+1}(S)_\alpha 
        %\Hom(P^{[e]}_{i+1}, S_R)_{\beta} 
        \ar[u]^\cong & 
        \cdots  \\
      }
    \end{xy}
  \end{equation}
together with a corresponding homotopy $h$. Note that while 
the form, which these maps take, depends on the choice of pivots 
in the Gau{\ss} reduction, the isomorphisms with any set of chosen 
reference vector spaces $\check H^q_{\leq e_0}(S)_\alpha$ 
for the cohomology groups are \textit{natural}.
For different exponent vectors $e \leq e' \in \NN^r$, however, 
we can not assume any compatibility of the respective 
homotopies $h$ with the natural maps 
$\check C^q_{\leq e}(S)_\alpha \to \check C^q_{\leq e'}(S)_\alpha$, 
i.e. that they were to form commutative diagrams.

The same considerations apply to the complexes $\Hom(P^\bullet_{[e]}, S)_\alpha$ 
and the computation of sheaf cohomology via graded strands of $\Ext$-modules.
\end{example}

\subsection{Simplification of total complexes} 

Consider the total complex of the truncated {\v C}ech double 
complex $\check C^\bullet_{\leq e}(M^\bullet)_0$ 
for a complex of free graded $S_R$-modules $(M^\bullet, \varphi^\bullet)$ 
of finite, free, graded $S_R$-modules $M^p = \bigoplus_{j=1}^{r_p} S_R[-\alpha_{p, j}]$
\begin{equation}
  \label{eqn:CechDoubleComplex}
  \begin{xy}
    \xymatrix{
      & \vdots & \vdots & \vdots & \vdots & \\
      \cdots \ar[r]^-\varphi &
      \check C^2_{\leq e}(M^{p-2})_0 \ar[u]^{\check \partial} \ar[r]^-\varphi & 
      \check C^2_{\leq e}(M^{p-1})_0 \ar[u]^{\check \partial} \ar[r]^-\varphi & 
      \check C^2_{\leq e}(M^{p  })_0 \ar[u]^{\check \partial} \ar[r]^-\varphi & 
      \check C^2_{\leq e}(M^{p+1})_0 \ar[u]^{\check \partial} \ar[r]^-\varphi & 
      \cdots \\
      \cdots \ar[r]^-\varphi &
      \check C^1_{\leq e}(M^{p-2})_0 \ar[u]^{\check \partial} \ar[r]^-\varphi & 
      \check C^1_{\leq e}(M^{p-1})_0 \ar[u]^{\check \partial} \ar[r]^-\varphi & 
      \check C^1_{\leq e}(M^{p  })_0 \ar[u]^{\check \partial} \ar[r]^-\varphi & 
      \check C^1_{\leq e}(M^{p+1})_0 \ar[u]^{\check \partial} \ar[r]^-\varphi & 
      \cdots \\
      \cdots \ar[r]^-\varphi &
      \check C^0_{\leq e}(M^{p-2})_0 \ar[u]^{\check \partial} \ar[r]^-\varphi & 
      \check C^0_{\leq e}(M^{p-1})_0 \ar[u]^{\check \partial} \ar[r]^-\varphi & 
      \check C^0_{\leq e}(M^{p  })_0 \ar[u]^{\check \partial} \ar[r]^-\varphi & 
      \check C^0_{\leq e}(M^{p+1})_0 \ar[u]^{\check \partial} \ar[r]^-\varphi & 
      \cdots \\
    }
  \end{xy}
\end{equation}
The differential of the total complex in homological degree $p$ has a 
block structure for the decomposition
\[
  \check C^{0}_{\leq e}(M^{p})_0 \oplus
  \check C^{1}_{\leq e}(M^{p+1})_0 \oplus
  \check C^{2}_{\leq e}(M^{p+2})_0 \oplus
  \cdots
  \longrightarrow
  \check C^{0}_{\leq e}(M^{p-1})_0 \oplus
  \check C^{1}_{\leq e}(M^{p})_0 \oplus
  \check C^{2}_{\leq e}(M^{p+1})_0 \oplus
  \cdots
\]
of its domain and codomain 
and the representing matrix of this differential with respect to any 
chosen basis for the blocks will take the form 
\[
  \Phi^p = 
  \left(
  \begin{array}{c|c|c|c|c}
    (-1)^{p-1}\varphi^{p, 0} & \check \partial^{p, 0} & & & \\
    \hline
    & (-1)^{p-1}\varphi^{p-1, 1} & \check \partial^{p-1, 1} & & \\
    \hline
    & & (-1)^{p-1}\varphi^{p-2, 2} & \check \partial^{p-2, 2}& \\
    \hline
    & & &\ddots & \ddots\\
  \end{array}
  \right).
\]
%\[
%  \Phi^p = 
%  \left(
%  \begin{array}{c|c|c|c}
%    (-1)^{p-1}\varphi^{p, 0} & & & \\
%    \hline
%    \check \partial^{p, 0} 
%    & (-1)^{p-1}\varphi^{p-1, 1} & & \\
%    \hline
%    &\check \partial^{p-1, 1} & (-1)^{p-1}\varphi^{p-2, 2} & \\
%    \hline
%    & & \check \partial^{p-2, 2}& \ddots \\
%    \hline
%    & & & \ddots \\
%  \end{array}
%  \right).
%\]
We will refer to this as the \textit{macro block structure} of the differential 
of the total complex.

\medskip
Each one of those macro blocks has again a block structure. Namely, since 
by assumption $M^i = \bigoplus_{k=0}^{r_i} S_R[-\alpha_{i, k}]$ for some degrees 
$\alpha_{i, k} \in \mathrm{Cl}(X)$ we have 
$\check C^j_{\leq e}(M^i)_0 \cong \bigoplus_{k=1}^{r_i} \check C^j_{\leq e}(S_R)_{\alpha_{i, k}}$.
The representing matrix for $\check \partial^{i, j}$ will then be of block diagonal 
form 
\[
  \left(
  \begin{array}{c|c|c|c}
    \check \partial^j_{\leq e}(\alpha_{i, 1}) & & & \\
    \hline
    & \check \partial^{j}_{\leq e}(\alpha_{i, 2}) & & \\
    \hline
    & & \check \partial^{j}_{\leq e}(\alpha_{i, 3}) & \\
    \hline
    & & & \ddots
  \end{array}
  \right)
\]
where $\check \partial^j_{\leq e}(\alpha)$ is the $j$-th differential of the 
$\alpha$-strand of the truncated \v Cech complex $\check C^\bullet_{\leq e}(S_R)_{\alpha}$. 
This will be referred to as the \textit{micro block structure} 
of the differential of the total complex. 

We wish to simplify the total complex of $\check C^\bullet_{\leq e}(M^\bullet)_0$ 
up to homotopy as recalled in 
Section \ref{sec:SimplificationUpToHomotopy}. In this process we are free 
to choose the invertible submatrices for the Gau{\ss} reduction to our liking. 
As was explained using Diagram (\ref{eqn:SimplificationUpToHomotopyRank1Module}), 
the representing matrix in a monomial basis of any micro block 
$\check \partial^j(\alpha_{i, k})$ for $\check \partial^{i, j}$ 
is of the form 
\[
  \left(
  \begin{array}{c|c}
    A & B \\
    \hline
    C & D
  \end{array}
  \right)
  \sim
  \left(
  \begin{array}{c|c}
    A & 0 \\
    \hline
    0 & 0
  \end{array}
  \right)
\]
for some invertible matrix $A$ and this can be brought to the form on the right 
hand side using row and column operations. This third level of block nesting 
will be referred to as the \textit{nano block structure}. If we consider the right hand side 
as a micro block in the matrix for the differential of the total complex, then 
the row and column operations used for reduction of the micro block extend to the ambient matrix 
\begin{eqnarray*}
  \left(
  \begin{array}{c|c|c|c|c|c}
    \ddots & \ddots & & & &\\
    \hline
    &(-1)^{p-1}\varphi^{p-i+1, i-1}& \check \partial^{p-i+1, i-1} & & & \\
    \hline
    & &(-1)^{p-1} \varphi^{p-i, i} & 
    \begin{array}{c|c|c}
      \begin{array}{c|c}
        A & B \\
        \hline
        C & D
      \end{array}
      & & \\
      \hline 
      & \check \partial^{i}_{\leq e}(\alpha_{p-i, 2}) & \\
      \hline
      & & \ddots \\
    \end{array}
    & \\
    \hline 
    & & &(-1)^{p-1} \varphi^{p-i-1, i+1} & \check \partial^{p-i-1, i+1} & \\
    \hline
    & & & &\ddots & \ddots\\
  \end{array}
  \right) 
  & & \\
  \qquad
  \sim \qquad
  \left(
  \begin{array}{c|c|c|c|c|c}
    \ddots & \ddots & & & & \\
    \hline
    &(-1)^{p-1} \varphi^{p-i+1, i-1} & \check \partial^{p-i+1, i-1} & & & \\
    \hline
    & & 
    \phi
    &
    \begin{array}{c|c|c}
      \begin{array}{c|c}
        A & 0 \\
        \hline
        0 & 0
      \end{array}
      & & \\
      \hline 
      & \check \partial^{i}_{\leq e}(\alpha_{p-i, 2}) & \\
      \hline
      & & \ddots \\
    \end{array}
    & \\
    \hline 
    & &  
    \kappa
    & 
    \psi
    & \check \partial^{p-i-1, i+1} & \\
    \hline
    & & & &\ddots & \ddots\\
  \end{array}
  \right)
\end{eqnarray*}
where now the matrix for $\phi$ has zeroes in the rows of the nano block 
for $A$ and $\psi$ has zeroes in the columns of the nano block for $A$. 
Moreover, we see a new block $\kappa$ emerging from those operations. 

We may apply this Gau{\ss} reduction for all the micro blocks for the 
$\check \partial_{\leq e}^{i-1}(\alpha_{p-i+1, k})$, $i=1, 2, \dots$
appearing in the matrix of the 
differential $\Phi_p$. As a result, we obtain matrices for the base 
change in domain and codomain to arrive at our new matrix $\Phi_p'$ 
after the reduction:
\begin{eqnarray*}
  & & 
  \left(
  \begin{array}{c|c|c|c|c|c}
    S^{-1} & 0 & 0 & \cdots & 0 & 0 \\
    \hline
    -S^{-1} \cdot \varphi \cdot h & S^{-1} & 0 & \cdots & 0 & 0 \\
    \hline
    S^{-1} \cdot \varphi \cdot h \cdot \varphi \cdot h & 
    S^{-1} \cdot \varphi \cdot h & 
    S^{-1} & \ddots & \vdots & \vdots \\
    \hline 
    \vdots & \ddots & \ddots & \ddots & 0 & 0 \\
    \hline 
    \pm S^{-1} \cdot \varphi \cdot h \cdots \varphi \cdot h & 
    \cdots &
    S^{-1} \cdot \varphi \cdot h \cdot \varphi \cdot h & 
    - S^{-1} \cdot \varphi \cdot h & 
    S^{-1} & 0
    \\
    \hline
    \mp \varphi \cdot h \cdots \varphi \cdot h & \dots & 
    \varphi \cdot h \cdot \varphi \cdot h \cdot \varphi \cdot h &
    \varphi \cdot h \cdot \varphi \cdot h &-\varphi \cdot h & \mathbf 1
  \end{array}
  \right)
  \cdot
  \left(
  \begin{array}{c|c|c|c|c}
    \varphi & \check \partial & & & \\
    \hline
    & \varphi & \ddots & & \\
    \hline
    & & \ddots & \check \partial & \\
    \hline
    & & & \varphi & \check\partial\\
    \hline
    & & & & \varphi
  \end{array}
  \right)
  \\
  \iffalse
  \left(
  \begin{array}{c|c|c|c|c}
    \varphi' & \Delta & & & \\
    \hline
    \kappa& \varphi' & \Delta & & \\
    \hline
    \vdots & \ddots & \ddots & \ddots & \\
    \hline
    \vdots & & \ddots & \varphi' & \Delta\\
    \hline
    \kappa& \cdots & \cdots & \kappa & \varphi' 
  \end{array}
  \right)
  \fi
  & & = 
  \left(
  \begin{array}{c|c|c|c}
    \psi & \Delta & & \\
    \hline
    \psi & \psi & \ddots & \\
    \hline
    \vdots & \ddots & \ddots & \Delta \\
    \hline
    \psi & \cdots & \psi & \psi
  \end{array}
  \right)
  \cdot
  \left(
  \begin{array}{c|c|c|c|c|c}
    \mathbf 1 & 0 & 0 & \cdots & 0 & 0\\
    \hline
    -h \cdot \varphi& T & 0 & \cdots & 0 & 0\\
    \hline
    h \cdot \varphi \cdot h \cdot \varphi& 
    -h \cdot \varphi \cdot T & T & \ddots & \vdots & \vdots \\
    \hline
    - h \cdot \varphi \cdot h \cdot \varphi \cdot h \cdot \varphi& 
    h \cdot \varphi \cdot h \cdot \varphi \cdot T& 
    -h \cdot \varphi \cdot T & \ddots  & 0 & 0 \\
    \hline
    \vdots & \vdots & \ddots & \ddots & T & 0 \\
    \hline
    \pm h \cdot \varphi \cdots h \cdot \varphi &
    \mp h \cdot \varphi \cdots h \cdot \varphi \cdot T& \cdots & 
    h \cdot \varphi \cdot h \cdot \varphi \cdot T & - h \cdot \varphi \cdot T & T \\
  \end{array}
  \right)^{-1}
\end{eqnarray*}
Here we omitted the numerous indices of the matrices appearing in the blocks. Which matrix is actually meant 
should be clear from its positional context. We have 
\begin{itemize}
  \item $\varphi$ the macro blocks for the induced horizontal maps in the \v Cech double complex.
  \item $\check \partial$ the block diagonal matrices with micro blocks 
    $\left(\begin{smallmatrix} A & B \\ C & D\end{smallmatrix}\right)$ 
    from the \v Cech complexes
  \item $S^{-1}$ the block diagonal matrix whose micro blocks read 
    $\left(\begin{smallmatrix} 1 & 0 \\ -CA^{-1} & 1\end{smallmatrix}\right)$ 
    for the Gau{\ss} reduction of the respective micro block of $\check \partial$.
  \item $h$ the block diagonal matrix whose micro blocks are 
    $\left(\begin{smallmatrix} A^{-1} & 0 \\ 0 & 0 \end{smallmatrix}\right)$ 
    for the same reduction as explained in Section \ref{sec:SimplificationUpToHomotopy}.
  \item $T$ the block diagonal matrix whose micro blocks read 
    $\left(\begin{smallmatrix} 1 & -A^{-1}B \\ 0 & 1\end{smallmatrix}\right)$ 
    for the respective column reductions.
  \item $\Delta$ the block diagonal matrices from the \v Cech complexes 
    after reduction with micro blocks 
    $\left(\begin{smallmatrix} A & 0 \\ 0 & 0 \end{smallmatrix}\right)$.
  \item $\psi$ the induced macro blocks after Gau{\ss} reduction. By construction, the rows and columns 
    of nano blocks $A$ appearing in the $\Delta$ are zero in $\psi$.
\end{itemize}
Note that only due to format restrictions we have written the matrix for column reduction with an inverse sign 
on the right hand side of the equation. 

The inverses of the respective base change matrices above are
\begin{eqnarray*}
  \left(
  \begin{array}{c|c|c|c|c}
    S & & & & \\
    \hline
    \varphi \cdot h & S & & & \\
    \hline
    0 & \varphi \cdot h & \ddots & & \\
    \hline
    \vdots & \ddots & \ddots & S & \\
    \hline
    0 & \cdots & 0 & \varphi \cdot h & \mathbf 1
  \end{array}
  \right)
  &\textnormal{ and }&
  \left(
  \begin{array}{c|c|c|c|c}
    \mathbf 1 & & & & \\
    \hline
    h \cdot \varphi & T^{-1} & & & \\
    \hline
    0 & h \cdot \varphi & T^{-1} & & \\
    \hline
    \vdots & \ddots & \ddots & \ddots & \\
    \hline
    0 & \cdots & 0 & h \cdot \varphi & T^{-1}
  \end{array}
  \right).
\end{eqnarray*}

We may perform the same 
reductions on the neighboring matrices in the total complex to obtain 
a combined base change 
\[
  U^{-1} = T_{\mathrm{in}}^{-1} \cdot S_{\mathrm{out}}^{-1} 
  \qquad \textnormal{ and } 
  \qquad
  U = S_{\mathrm{out}} \cdot T_{\mathrm{in}}
\]
where we denote by $T_{\mathrm{out}}$ the base change in the codomain 
of the incoming and by $S_{\mathrm{out}}$ the base change in the domain 
of the outgoing map. 
Unfortunately, these matrices do not admit a reasonable 
closed form to write down here. 

\medskip
Denote the homotopies of the induced Gau{\ss} reduction of the total complex of 
(\ref{eqn:CechDoubleComplex}) by $h$ and the inclusion of and the projection 
to the simplified complex by $\iota$ and $\rho$. The reader may then verify the 
following Lemma along the above lines.
\begin{lemma}
  \label{lem:TotalComplexRaw}
  The Gau{\ss} reduction of the total complex of (\ref{eqn:CechDoubleComplex})
  induced by the Gau{\ss} reduction of the strands of the \v Cech complexes  
  $\check C^\bullet_{\leq e}(S_R)_{-\alpha_{i, k}}$ in the columns of the double 
  complex results in a complex $W^\bullet$ with terms 
  \[
    W^i = 
    %\bigoplus_{p+q = i} \bigoplus_{k=1}^{r_p} 
    %\check H^q_{\leq e}(X, \OO(-\alpha_{p, k})) \otimes_{\mathbf k} R 
    \bigoplus_{p+q = i} \bigoplus_{k=1}^{r_p} 
    \check H^q_{\leq e}(S_R[-\alpha_{p, k}])_0
    =
    \bigoplus_{p+q = i} \bigoplus_{k=1}^{r_p} 
    \check H^q_{\leq e}(S)_{\alpha_{p, k}} \otimes_{\mathbf k} R 
  \]
  and morphisms $\phi^i \colon W^i \to W^{i+1}$ whose blocks with respect to the 
  decomposition into direct sums are given by 
  \[
    \phi^{i, r}_{p, q} \colon 
    \check H^q_{\leq e}(S_R[-\alpha_{p, k}])
    \to
    \check H^{q-r+1}_{\leq e}(S_R[-\alpha_{p+r, k}]),
    \qquad
    \phi^{i, r}_{p, q} = (-1)^{(i-1)(r-1)} \rho \circ \underbrace{\varphi \circ h \circ \cdots 
    \circ \varphi \circ h}_{r-1 \textnormal{ times }} \circ \varphi \circ \iota.
  \]
\end{lemma}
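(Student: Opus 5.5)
The argument is the homological perturbation lemma applied to the column filtration of the double complex. Set $D = \delta + \varphi$ for the differential of the total complex $K^\bullet = \mathrm{Tot}\,\check C^\bullet_{\leq e}(M^\bullet)_0$, where $\delta = \check\partial$ is the vertical part and $\varphi$ the horizontal part, with the sign $(-1)^{p-1}$ absorbed.

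Since $M^p$ is free, each column $\check C^\bullet_{\leq e}(M^p)_0$ splits as the direct sum of the strands $\check C^\bullet_{\leq e}(S_R)_{\alpha_{p,k}}$. By Example \ref{exp:CechComplexReduction}, Gau{\ss} reduction of each strand over $\mathbf k$, followed by base change to $R$, gives deformation retract data $(\iota_0,\rho_0,h)$ with $\rho_0\iota_0=1$ and $h\delta+\delta h = 1-\iota_0\rho_0$. Its target is $\check H^q_{\leq e}(S)_{\alpha_{p,k}}\otimes R$ with zero differential, and these data assemble along the micro blocks. Because the nano blocks come from Gau{\ss} reduction, the side conditions $h^2=0$, $h\iota_0=0$ and $\rho_0 h=0$ hold automatically.

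Next, treat $\varphi$ as a perturbation of $\delta$. The operator $\varphi h$ raises the column index $p$ by one and keeps the total degree fixed, since $h$ has vertical degree $-1$ and $\varphi$ has horizontal degree $+1$. Every column complex has length at most $r$, so $(\varphi h)^{r}$ is nilpotent on each total degree and $(1-\varphi h)^{-1}=\sum_{k\geq 0}(\varphi h)^k$ is a finite sum. The perturbation lemma then gives a new deformation retract of $(K^\bullet, D)$ onto $W^\bullet=\bigoplus \check H^q\otimes R$ with the following data:
\[
  \phi = \rho_0\sum_{k\geq 0}(\varphi h)^k\varphi\,\iota_0,\quad
  \iota=\sum_{k\ge0}(h\varphi)^k\iota_0,\quad
  \rho=\rho_0\sum_{k\geq 0}(\varphi h)^k .
\]
The block of $\phi$ in bidegree $(p,q)\to(p+r,q-r+1)$ is exactly the term $k=r-1$. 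This yields the claimed formula for $\phi^{i,r}_{p,q}$ up to sign. Since $h$ vanishes on cohomology representatives, the $r=1$ block is the $E_1$ map.

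It remains to identify this abstract retract with the concrete row and column reduction described above. The base change matrices written out there have sub-diagonal blocks $\pm S^{-1}(\varphi h)^k$ and $\pm (h\varphi)^k T$. These are precisely the matrix forms of $\rho$ and $\iota$ composed with the splitting of each micro block. Multiplying them out block by block reproduces the geometric series term by term. Since the lemma only claims the shape of the reduced complex, a one-line induction on $r$ suffices: expanding $U^{-1}\Phi U$ shows that the $(p,p+r)$ block equals $\rho(\varphi h)^{r-1}\varphi\iota$ plus terms that vanish by the side conditions.

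The main obstacle is the sign $(-1)^{(i-1)(r-1)}$. The horizontal maps carry the factor $(-1)^{p-1}$ depending on total degree, and $h$ shifts total degree by $-1$. In the product $\varphi h\cdots\varphi h\varphi$, therefore, the $r$ factors of $\varphi$ act at total degrees $i, i, \dots$ with alternating parity shifts, and the perturbation lemma contributes $(-1)^{r-1}$ from the expansion of $(1+\varphi h)^{-1}$ when $h$ is taken with the Gau{\ss} sign convention. I would track these signs explicitly by fixing $D=\delta+(-1)^{p-1}\varphi$ and $h$ of degree $-1$. Each passage through $h$ then contributes a factor $(-1)^{i-1}$ relative to the unsigned $\varphi$, and collecting the $r-1$ such factors should yield $(-1)^{(i-1)(r-1)}$. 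I would check this against the case $r=2$ directly before asserting it in general.
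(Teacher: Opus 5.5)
Structurally your argument is right, and it is a cleaner packaging of what the paper does by hand. The paper only indicates the proof. It reads the lemma off from explicit row and column operations, whose base change matrices (built from $S^{-1}$, $T$ and $h$) are block lower triangular with the alternating series $-\varphi h, +\varphi h\varphi h,\dots$ and $-h\varphi, +h\varphi h\varphi,\dots$ below the diagonal. Conjugating the total differential by these matrices yields the blocks $\rho(\varphi h)^{r-1}\varphi\iota$.

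The homological perturbation lemma produces the same series without matrix bookkeeping, using the side conditions and the nilpotency of $\varphi h$ (the \v Cech columns are bounded). It also hands you the transferred $\iota,\rho$ that the paper later needs for $\mathbb W(\theta)$.

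What the explicit matrices give, and what you only assert with your ``one-line induction'', is that the \emph{specific} pivot-by-pivot reduction of the total complex yields this retract. This is easy to close:
\begin{itemize}
\item A Schur complement at a \v Cech pivot changes only the same \v Cech micro block (exactly as in the strand), horizontal blocks, and knight-move blocks. Hence the strand pivots remain valid pivots in the total complex.
\item Both the total reduction and your perturbed data have $\mathrm{im}\,h=P$, the span of the pivot-column basis vectors.
\item So both projections vanish on $P+D(P)$ and are the identity on the surviving basis vectors. They therefore coincide, and so do the transferred differentials.
\end{itemize}

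The sign is where the proposal does not deliver, and your sketch goes wrong in two places.
\begin{itemize}
\item With your normalization $h\delta+\delta h=1-\iota_0\rho_0$, the transferred differential is $\rho_0\sum_k(-\varphi h)^k\varphi\iota_0$, not $\rho_0\sum_k(\varphi h)^k\varphi\iota_0$. This is the alternation visible in the paper's base change matrices, and the same correction applies to your $\iota$ and $\rho$.
\item There are no alternating parity shifts. Since $h$ returns to total degree $i$, all $r$ horizontal maps (not $r-1$) leave total degree $i$ and carry the same sign $(-1)^{i-1}$, while the homotopies carry none.
\end{itemize}
The correct count is therefore $(-1)^{(r-1)+r(i-1)}\rho(\varphi h)^{r-1}\varphi\iota$ with unsigned $\varphi$. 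For $r=2$ the two horizontal signs cancel, so the block is $-\rho\varphi h\varphi\iota$ for every $i$, whatever total-degree-dependent sign the horizontal maps carry.

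A toy double complex shows this concretely. Take $C^{0,1}=\langle u\rangle$, $C^{1,0}=\langle a\rangle\xrightarrow{\check\partial}C^{1,1}=\langle b\rangle$, $C^{2,0}=\langle c\rangle$, with $\varphi(u)=b$ and $\varphi(a)=c$. The Schur complement $D-CA^{-1}B$ sends $u\mapsto -c$ at $i=1$, whereas $(-1)^{(i-1)(r-1)}\rho\varphi h\varphi\iota(u)=+c$.

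The discrepancy $(-1)^{i+r}$ is a diagonal sign change. It equals $\sigma(\text{target})\,\sigma(\text{source})$ for $\sigma(p,q)=(-1)^{q+\binom{p+q-1}{2}}$, so rescaling the summand $\check H^q(M^p)$ of $W^{p+q}$ by $\sigma(p,q)$ turns one complex into the other. The stated exponent thus holds only up to this isomorphism, which is harmless for determinants up to sign and for the quasi-isomorphism class. Your ``collect $r-1$ factors of $(-1)^{i-1}$'' lands on $(-1)^{(i-1)(r-1)}$ only by miscounting. You should prove the formula up to this sign twist rather than try to force the literal exponent.
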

To assist the reader in their understanding, let us remark that 
the individual blocks $\phi_{p, q}^{i, r}$ arise for $r = 1, 2, \dots$ 
by tracing an element $v \in \check H^q_{\leq e}(S_R[-\alpha_{p, k}])$ 
while we ``walk down the staircase'' through the following diagram and 
its analog for the ring $S_R$. 
\begin{equation}
  \label{eqn:StaircaseLifting}
  \begin{xy}
    \xymatrix{
      % zeroth row
      &
      \bigoplus_{k = 1}^{r_{p+1}} \check H^{q}_{\leq e}(S[-\alpha_{p+1, k}])_0
      & \\
      % first row
      \bigoplus_{k = 1}^{r_p} \check C^q_{\leq e}(S[-\alpha_{p, k}])_0 
      \ar[r]^{\pm\varphi} &
      \bigoplus_{k = 1}^{r_{p+1}} \check C^q_{\leq e}(S[-\alpha_{p+1, k}])_0 
      \ar@/_1pc/[d]_{h}
      \ar[u]^\rho 
      &
      \bigoplus_{k = 1}^{r_{p+2}} \check H^{q-1}_{\leq e}(S[-\alpha_{p+2, k}])_0
      \\
      % second row
      \bigoplus_{k = 1}^{r_p} \check H^q_{\leq e}(S[-\alpha_{p, k}])_0
      \ar@{^{(}->}[u]^\iota 
      & 
      \bigoplus_{k = 1}^{r_{p+1}} \check C^{q-1}_{\leq e}(S[-\alpha_{p+1, k}])_0 
      \ar[u]_{\check \partial}
      \ar[r]^{\pm\varphi}
      &
      \bigoplus_{k = 1}^{r_{p+2}} \check C^{q-1}_{\leq e}(S[-\alpha_{p+2, k}])_0 
      \ar@/_1pc/[d]_{h}
      \ar[u]^\rho 
      \\
      % third row
      & & 
      \bigoplus_{k = 1}^{r_{p+2}} \check C^{q-2}_{\leq e}(S[-\alpha_{p+2, k}])_0 
      \ar[u]_{\check \partial}
      \ar[r]^-{{\pm\varphi}} &
      \cdots
      \\
    }
  \end{xy}
\end{equation}

\subsection{Detailed construction of the Weyman functor}
\label{sec:WeymanFunctorDetails}
The complex $W^\bullet$ from Lemma \ref{lem:TotalComplexRaw} is already close 
to the result of the Weyman functor applied to $M^\bullet$. To obtain 
$\mathbb W^\bullet (M^\bullet)$ we choose a function 
\begin{equation}
  \label{eqn:OptimalKFunction}
  e_{\min} \colon \mathrm{Cl}(X) \to \NN^r
\end{equation}
where $r$ is the number of monomial generators of the irrelevant ideal 
$I \subset S$ for $X$ such that 
\[
  H^q(X, \OO(-\alpha)) \cong \check H^q_{\leq e_{\min}(\alpha)}(S)_\alpha.
  %H^q(X, \OO(-\alpha)) \cong H^q\left(\Hom(P^\bullet_{[e_{\min}(\alpha)]}, S)_\alpha\right)
\]
%for a minimal resolution $P^\bullet_{[e_{\min}(\alpha)]}$ of 
%the Frobenius power $I^{[e_{\min}]}$. 
In particular, this will choose our 
\textit{reference model} for the sheaf cohomology of the twisting sheaves, i.e. 
we will set 
\begin{equation}
  \label{eqn:CohomologyModelChoice}
  R^q\pi_* \OO(-\alpha) := \check H^q_{\leq e_{\min}(\alpha)}(S)_{\alpha} \otimes_{\mathbf k} R
\end{equation}
with the cohomology on the right hand side computed via Gau{\ss} reduction 
of the $\alpha$-strand of the \v Cech complex 
$\check C^\bullet_{\leq e_{\min}(\alpha)}(S)_{\alpha}$. 

For our implementation we actually use the computation of direct 
images via graded strands of $\Ext$-modules and choose $e_{\min}$ as in 
\cite[Proposition 3.1]{EisenbudMustataStillman2000}
with the computation of support sets via $\Ext$ as in 
\cite[Corollary 1.2 (b)]{EisenbudMustataStillman2000}.
It seems possible to make reasonable refinements of this strategy 
by not only considering powers of the monomial generators of the 
irrelevant ideal, but also other co-finite limits of ideals. 
However, we have not yet explored this beyond products 
of projective spaces. 

\begin{remark}
  \label{rem:DirectImageChoice}
  Note that (\ref{eqn:CohomologyModelChoice}) 
  has little meaning in a strict mathematical sense, 
  since its left hand side is usually understood as the 
  cohomology of \textit{all} complexes representing the direct image, 
  but up to the natural induced identifications.
  However, for a practical computation and implementation of these structures, 
  the systematic choice of a unique representative of such equivalence classes 
  and realizing the explicit transition between different such representatives, 
  is crucial. 
\end{remark}

\subsubsection{Differentials in the Weyman complex}
\label{sec:DifferentialsInTheWeymanComplex}
The upshot of the considerations of the Gau{\ss} reduction of the total complex 
of (\ref{eqn:CechDoubleComplex}) in the previous section is that we may compute 
the terms and maps of $W^\bullet$ from Lemma \ref{lem:TotalComplexRaw} 
\textit{blockwise} and without even composing the matrices for the 
differentials of the total complex. 
In order to compute the differential 
\[
  \phi^i \colon \mathbb W^i(M^\bullet) \to \mathbb W^{i+1}(M^\bullet)
\]
we proceed as follows along the staircase in (\ref{eqn:StaircaseLifting}). Let 
$u \in R^q \pi_* \OO(-\alpha_{p, k_0}) \hookrightarrow \bigoplus_{k=1}^{r_p} 
\check C_{\leq e_0}^q (S[-\alpha_{p, k}])_0$
be a generator of $\mathbb W^i(M^\bullet)$ for 
$e_0 = e_{\min}(\alpha_{p, k_0})$ with $k_0$ the index of the summand of $u$. 
Then we set 
$v_1 = \varphi(u) \in \bigoplus_{k=1}^{r_{p+1}} \check C_{\leq e_0}^q (S[-\alpha_{p+1, k}])_0$
and 
$w_1 = \rho(v_1) \in \bigoplus_{k=1}^{r_{p+1}} \check H^q_{\leq e_0} (S[-\alpha_{p+1, k}])_0$.
Then we define inductively 
\[
  v_{r+1} = \varphi(h(v_r)), \quad w_{r+1} = \rho(v_{r+1}).
\]
Finally, we can make the identifications 
\[
  W^{i+1} \supset 
  \bigoplus_{k=1}^{r_{p+1}} \check H^{q-r+1}_{\leq e_0}(S_R[-\alpha_{p+r, k}])_0 
  \overset{\cong}{\longrightarrow}
  \bigoplus_{k=1}^{r_{p+1}} \check H^{q-r+1}_{\leq e_{\min}(-\alpha_{p+r, k})}(S_R[-\alpha_{p+r, k}])_0 
  \subset 
  \mathbb W^{i+1}(M^\bullet)
\]
explicit for all summands appearing in domain and codomain of $\phi^i$. 
Then $\phi_{p, q}^{i, r}(u)$ is the image of $w_r$ along these identifications. 

\begin{remark}
  \label{rem:SpeedupThroughBlocks}
  The crucial observation for making these computations efficient is that they all 
  can be done in sparse block form for the micro blocks in the Weyman complex. Thus 
  a module element 
  $u \in \bigoplus_{k=1}^{r_p} \check C^\bullet_{\leq e}(S)_\alpha$ 
  will be stored as 
  \[
    u = \sum_{k : u_k \neq 0} u_k, \qquad u_k \in \check C^\bullet_{\leq e}(S_R)_\alpha
  \]
  in memory. All terms possibly appearing in these sums belong to strands for different 
  $\alpha$ of one and the same direct limit of complexes of graded $S_R$-modules
  $\lim_{e \in \NN^r} C^\bullet_{\leq e}(S)$. 
  Hence creating the objects and maps of this limit only once and caching them allows for massive 
  recycling throughout the computations. This strategy is also the backbone of 
  \cite{Zach25} and explained in more detail there. For the implementation of the Weyman 
  functor we extended this already existing functionality.
  In particular, the morphisms in the \v Cech complexes, including those stemming from 
  the Gau{\ss} reductions, can be created over $\mathbf k$ rather then $R$ and then 
  be ported via base change. 

  Thus, a major part of required computations 
  for the Weyman complex depends only on the toric variety $X$ and not on the specific 
  complex of graded $S_R$-modules. In our implementation these computations are 
  cached; they could even be stored in a database and loaded upon request. 
  Moreover, for specific toric varieties, such as e.g. products of projective spaces, 
  it should be possible to produce the simplifications of the strands up to homotopy 
  in a direct, hardcoded way, instead of applying a generic Gau{\ss} reduction algorithm. 
  This should bring computation times closer to those with an already filled cache. 

  As for the part of the computation which actually depends on the complex and not 
  only on the toric variety $X$: 
  The differentials in the original complex $f^j \colon M^j \to M^{j+1}$
  can be assumed to be sparse polynomial maps over $S_R$
  and every entry 
  of the representing matrix for $f^j$ is a single homogeneous 
  polynomial $f^j_{k, l} \in (S_R)_{\alpha_{j, k} - \alpha_{j+1, l}}$.
  The individual macro and micro blocks of the horizontal maps 
  $\varphi$ induced in the \v Cech double complex 
  (\ref{eqn:CechDoubleComplex}) can be evaluated quickly from those 
  $f_{p, q}^j$. Sparsity then leads to a significant decrease in the number 
  of maps which are actually required in the computation. 
\end{remark}

\begin{example}
  \label{exp:CotantengSheafExample}
  To illustrate the benefits described in Remark \ref{rem:SpeedupThroughBlocks}, 
  let us consider the following example. 
  We fix a degree $d \in \NN$ for a family of hypersurfaces in 
  $\mathbb{P}^n_R$ over $\Spec R$ for some ring $R = \mathbb{Q}[a_1,\dots, a_r]$ 
  of parameters. On this family we then construct graded modules for certain twists 
  of the relative cotangent sheaf up to quasi-isomorphism and compute its direct image 
  on $\Spec R$.

  Suppose $S = R[x_0, \dots, x_n]$ is 
  the homogeneous coordinate ring and $F \in S_d$ a homogeneous equation of degree 
  $d$ defining the family $X \subset \mathbb{P}^n_R$ over $R$.
  We may restrict the Euler sequence 
  \[
    \begin{xy}
      \xymatrix{
        0 \ar[r]& 
        \Omega^1 \ar[r] & 
        \OO(-1)^{n+1} \ar[r]^-x & 
        \OO \ar[r] &
        0
      }
    \end{xy}
  \]
  to $X$ to obtain $\Omega^1|_X$ up to quasi isomorphism as the 
  total complex of 
  \[
    \begin{xy}
      \xymatrix{
        \OO(-d-1)^{n+1} \ar[r]^-x \ar[d]^{F \cdot \mathbf{1}}&
        \OO(-d) \ar[d]^{\cdot F}\\
        \OO(-1)^{n+1}. \ar[r]^x &
        \OO
      }
    \end{xy}
  \]
  Combining this with the short exact sequence
  \[
    \begin{xy}
      \xymatrix{
        0 \ar[r] & 
        \OO_X(-d) \ar[r]^{\mathrm{d}\, F} & 
        \Omega^1|_X \ar[r] & 
        \Omega^1_X \ar[r] &
        0 
      }
    \end{xy}
  \]
  we eventually find that the complex of graded $S$-modules given by 
  \[
    \begin{xy}
      \xymatrix{
        %0 & 
        S[-2d]\ar[r]^-{
          \begin{pmatrix}
            F & -\mathrm{d}\, F
          \end{pmatrix}
        } &
        S[-d] \oplus S[-d-1]^{n+1} \ar[r]^-{
          \begin{pmatrix}
            -d & \mathrm{d}\, F \\
            -x & F\cdot \mathbf{1}
          \end{pmatrix}
        }
        &
        % &
        % 0 \ar[l]
        S[-d] \oplus S[-1]^{n+1} \ar[r]^-{
          \begin{pmatrix}
            F \\
            x
          \end{pmatrix}
        }
        & 
        S %\ar[l] & 
      }
    \end{xy}
  \]
  has a homology module $M$ at the second term from the right hand side which 
  sheafifies to $\Omega^1_X$. All other homologies are either zero or primary to 
  the irrelevant ideal $\mathfrak m = \langle x_0, \dots, x_n\rangle$. 

  We will be interested in the direct image $R\pi_* \Omega^1_X(\lfloor 2\frac{d}{3}\rfloor)$ 
  for a family of hypersurfaces given by 
  \[
    F = \sum_{i=1}^{10} a_i \cdot x^{\nu_i}, \qquad |\nu_i| = d, \quad i = 1,\dots 10
  \]
  for ten randomly chosen monomials $x^{\nu_i}$ of degree $d$. 
  The choice of the twist has not been made because of any geometric significance 
  of this particular family of sheaves, but because it produces interesting examples 
  which illustrate the point in question here. 

  Figure \ref{fig:CotangentSheafExampleRuntimes} 
  shows runtimes for the the computation of the morphism in degree $0$ of the direct
  image $R\pi_* \Omega^1_X(\lfloor 2\frac{d}{3}\rfloor)$ for $n = 2$ and $1\leq d \leq 40$.
  We use different methods: Our implementation for 
  the Weyman functor, the Weyman functor applied, but with its cache (for all computations 
  depending only on the toric variety $X$, but not on the concrete complex) already filled, 
  and -- for comparison -- the direct Gau{\ss} reduction for the total complex 
  of the minimal truncation of the $\Ext$-analog of the total complex of (\ref{eqn:CechDoubleComplex}). 
  The actual numbers and the memory consumption are given in Table 
  \ref{tab:TimingsCotangentSheafExample} in the appendix.

  We extended the plot in Figure \ref{fig:CotangentSheafExampleRuntimes} 
  with graphs of the form $t(d) = a^{d-d_0}$, 
  fitting the different runtimes. 
  For the application of the Weyman functor, the best fit is given by 
  $a \approx 1.08832$ and $d_0 \approx 3.03249$; for direct Gau{\ss} reduction of the 
  total complex we find $a \approx 1.13837$ and $d_0 \approx -3.62572$.
  These results suggest that in both cases the runtime grows roughly exponentially 
  with the degree $d$. However, there is a significant difference 
  in the base of the exponential function so that the actual runtimes differ by orders of magnitude. 

  The ``break'' and deviations in the graph of the runtimes for the computation of 
  the Weyman complex with a filled cache is explained by julia's 
  garbage collector: The first few computations are so small
  that on average the randomly triggered 
  garbage collection does not fall within the execution time.
  From around $d = 27$ onwards this changes so that now the runtime usually comprises 
  garbage collection, too.
\end{example}

\begin{figure}[ht]
    \centering
    \includegraphics[scale=0.6, clip=true]{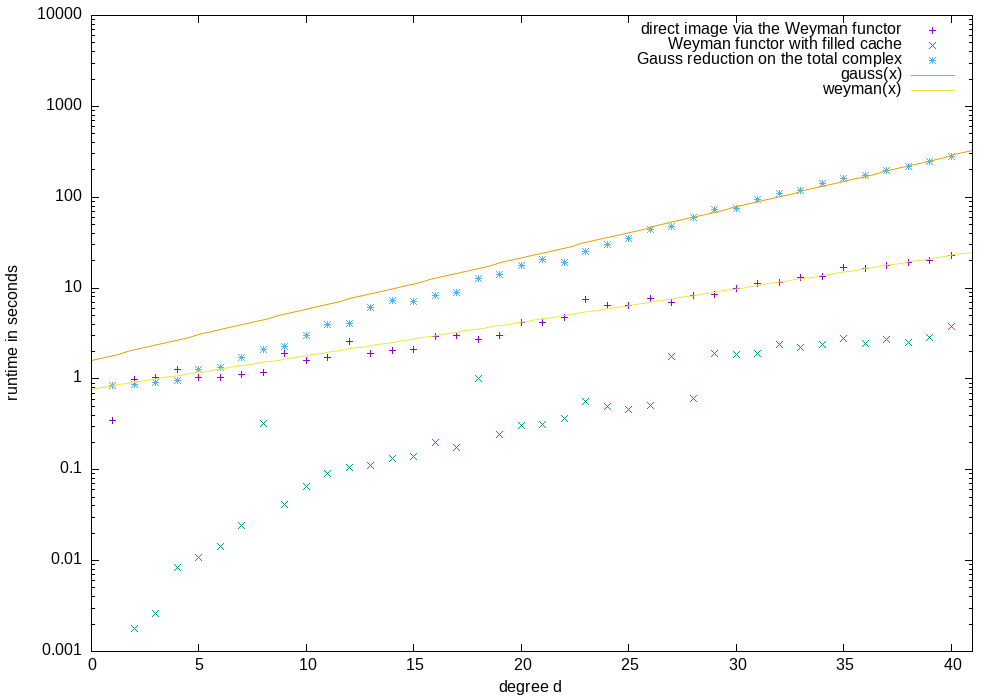}
    \caption{Runtimes for the computation of the morphism of the direct image from Example \ref{exp:CotantengSheafExample} and Table \ref{tab:TimingsCotangentSheafExample}}
    \label{fig:CotangentSheafExampleRuntimes}
\end{figure}
%While we might not be able to avoid computation with monomial bases as big as 
%in Example \ref{exp:CotantengSheafExample}
%the results presented in this paper will allow for two things:
%\begin{itemize}
%  \item Precompute the main chunks of reduction of monomial bases depending 
%    only on the toric variety $X$, but not on any specific complex of graded 
%    $S_R$-modules.
%  \item Completely avoid redundancies such as to do $n+1$ computations for 
%    the module $S[-1]$ when it appears as $S[-1]^{n+1}$ in a given complex 
%    of modules.
%\end{itemize}

\begin{remark}
  \label{rem:WellDefinednessOfDifferential}
  The construction of the differential of the Weyman complex depends on 
  numerous choices, including the exponent vector $e$ and the pivots for 
  the Gau{\ss} reduction of the individual strands. Either two different 
  choices will, in general, lead to different matrices representing the 
  differential. It is important to note that the change of basis 
  translating one matrix to the other is not only induced from the 
  direct limits 
  \[
    \check H_{\leq e}^k(M^p)_0 \to \check H_{\leq e'}^k(M^p)_0
  \]
  for $e \leq e'$ and eventual Gau{\ss} reduction to write these 
  cohomology groups as free $R$-modules. In fact, the change of basis 
  goes \textit{across macro blocks} and depends on the differentials 
  in the complex $M^\bullet$, as can be seen from the explicit 
  block lower triangular form of $U^{-1}$ above.
  Taking these base changes into account, we see that 
  the differential of the Weyman complex is indeed \textit{natural} and does 
  \textit{not} depend on the choices made in its construction.
\end{remark}

\subsection{The Weyman functor on morphisms of complexes}

Let $\theta \colon M^\bullet \to N^\bullet$ be a morphism of complexes of 
graded $S_R$-modules. We wish to describe the evaluation of the Weyman functor 
on the morphism $\theta$:
\[
  \mathbb W(\theta) \colon \mathbb W(M^\bullet) \to \mathbb W(N^\bullet).
\]
Given the explicit construction of the Weyman functor by means of linear algebra, 
the problem boils down to more or less a change of basis. 

For every exponent vector $e \in \NN^r$ the morphism $\theta$ induces a morphism 
of double complexes
\begin{equation}
  \label{eqn:MorphismOfDoubleComplexes}
  \begin{xy}
    \xymatrix{
      % zeroth row
      & \vdots & \vdots & \vdots & \vdots & \\
      % first row
      \cdots \ar[rr]^<<<<<<\psi & &
      \check C^1_{\leq e}(N^{p-1})_0 \ar[u]^{\check \partial} \ar[rr]^<<<<<<\psi & 
      &
      \check C^1_{\leq e}(M^{p})_0 \ar[u]^{\check \partial} \ar[r]^<<<<<<\psi & 
      \cdots \\
      % second row
      \cdots \ar[r]^-\varphi &
      \check C^1_{\leq e}(M^{p-1})_0 \ar[uu]^<<<<<<{\check \partial} \ar[rr]^<<<<<<<\varphi 
      \ar[ur]^\theta & 
      &
      \check C^1_{\leq e}(M^{p})_0 \ar[uu]^<<<<<<{\check \partial} \ar[rr]^<<<<<<<\varphi 
      \ar[ur]^\theta & 
      &
      \cdots
      \\
      % third row
      \cdots \ar[rr]^<<<<<<\psi & &
      \check C^0_{\leq e}(N^{p-1})_0 \ar[uu]^<<<<<<<{\check \partial} \ar[rr]^<<<<<<\psi & 
      &
      \check C^0_{\leq e}(M^{p})_0 \ar[uu]^<<<<<<<{\check \partial} \ar[r]^<<<<<<\psi & 
      \cdots \\
      % fourth row
      \cdots \ar[r]^-\varphi &
      \check C^0_{\leq e}(M^{p-1})_0 \ar[uu]^<<<<<<{\check \partial} \ar[rr]^<<<<<<<\varphi 
      \ar[ur]^\theta & 
      &
      \check C^0_{\leq e}(M^{p})_0 \ar[uu]^<<<<<<{\check \partial} \ar[rr]^<<<<<<<\varphi 
      \ar[ur]^\theta & 
      &
      \cdots
      \\
    }
  \end{xy}
\end{equation}
which, in turn, furnishes a morphism of total complexes. These form a natural direct 
limit indexed by the choices of exponent vectors $e \in \NN^r$. 

We may perform Gau{\ss} reduction of both total complexes in the domain and the codomain 
as in Lemma \ref{lem:TotalComplexRaw}. This leads to a change 
of basis in every homological degree $i$ which contains the homologies of the columns 
as direct summands: 
\[
  \bigoplus_{p + q = i} \check H^q_{\leq e}(M^p)_0 
  \overset{\iota}{\hookrightarrow}
  \bigoplus_{p + q = i} \check C^q_{\leq e}(M^p)_0 
  \overset{\theta}{\longrightarrow}
  \bigoplus_{p + q = i + 1} \check C^q_{\leq e}(M^p)_0 
  \overset{\rho}{\longrightarrow}
  \bigoplus_{p + q = i + 1} \check H^q_{\leq e}(M^p)_0.
% \bigoplus_{p + q = i} \bigoplus_{k = 1}^{r_p} \check H^q_{\leq e}(S[-\alpha_{p, k}])_0 
% \hookrightarrow
% \bigoplus_{p + q = i} \bigoplus_{k = 1}^{r_p} \check C^q_{\leq e}(S[-\alpha_{p, k}])_0 
\]
The morphism $\mathbb W(\theta)$ is then given by the composition of these maps, 
together with their natural identifications in cohomology in its domain and codomain. 

Spelled out on an element 
$u \in R^{q_0}\pi_* \OO(-\alpha_{p_0, k}) \subset R^{q_0}\pi_* \widetilde M^{p_0}$ this means 
that we first take it to the element 
$u' \in \check H^{q_0}_{\leq e}(S_R[-\alpha_{p_0, k}])_0 \subset \check C^{q_0}_{\leq e}(S_R[-\alpha_{p_0, k}])_0$ 
which is then spread out to 
\[
  \{v_{p, q}\}_{p + q = i} \in 
  \bigoplus_{p + q = i} \check C^{q}_{\leq e}(M^{p})_0,
  \quad v_{p, q} = 
  \begin{cases}
    0 & \textnormal{ if } q > q_0 \textnormal{ or } q < 0, \\
    u' & \textnormal{ if } q = q_0, \\
    (-1)^{i-1} \cdot h(\varphi(v_{p-1, q-1})) & \textnormal{ if } 0 \leq q < q_0.
  \end{cases}
\]
This element is then mapped via the block diagonal matrix for $\theta$ to 
\[
  \{w_{p, q}\}_{p + q = i} \in 
  \bigoplus_{p + q = i} \check C^{q}_{\leq e}(N^{p})_0,
  \quad w_{p, q} = \theta(v_{p, q}).
\]
On the codomain's side we invert the change of basis to get 
\[
  \{w'_{p, q}\}_{p + q = i} \in 
  \bigoplus_{p + q = i} \check C^{q}_{\leq e}(N^{p})_0,
  \quad w'_{p, q} = 
  \begin{cases}
    0 & \textnormal{ if } q > q_0 \textnormal{ or } q < 0, \\
    \theta(v_{p, q}) & \textnormal{ if } q = q_0, \\
    \theta(v_{p, q}) + (-1)^{i-1} \cdot h(\psi(w'_{p-1, q-1})) & \textnormal{ if } 0 \leq q < q_0.
  \end{cases}
\]
This element is then projected to the chosen cohomology models for the codomain. 

\begin{example}
  Let $I = \langle x_0, \dots, x_n \rangle \subset S = \QQ[x_0, \dots, x_n]$ 
  be the irrelevant ideal of projective space $\PP^n_{\QQ}$ and consider the complex 
  \[
    \begin{xy}
      \xymatrix{
        0 \ar[r] &
        I \ar[r] &
        S \ar[r] &
        0
      }
    \end{xy}
  \]
  This complex of graded modules sheafifies to the zero sheaf up to quasi isomorphism, so its direct image 
  on $\Spec \QQ$ should be zero. 

  Since $I$ is not a free module, our methods do not apply directly and we need to replace it by a free 
  resolution. The latter is provided by a truncated Koszul complex so that we obtain a morphism of complexes 
  \[
    \begin{xy}
      \xymatrix{
        & & & C^\bullet \colon &
        0 \ar[r] &
        S \ar[r] &
        0 \\
        K^\bullet \colon 
        0 \ar[r] &
        \bigwedge^{n+1} S[-1]^{n+1} \ar[r] & 
        \bigwedge^{n} S[-1]^{n+1} \ar[r] & 
        \cdots \ar[r] & 
        \bigwedge^{2} S[-1]^{n+1} \ar[r] & 
        \bigwedge^{1} S[-1]^{n+1} \ar[r] \ar[u]_\theta & 
        0.
      }
    \end{xy}
  \]
  As usual, the maps in $K^\bullet$ are given by contraction with $x = (x_0, \dots, x_n)$. 
  The Weyman complex for the one term complex $C^\bullet$ in the codomain is easily written down 
  as 
  \[
    \mathbb W^i(C^\bullet) = \begin{cases}
      H^0(\PP^n_\QQ, \OO) = \check H^0_{\leq (0, \dots, 0)}(S)_0 = \QQ \cdot \bigoplus_{j=0}^n \frac{1}{x_j^0} 
      %H^0(\PP^n_\QQ, \OO) = \check H^0_{\leq (1, \dots, 1)}(S)_0 = \QQ \cdot \bigoplus_{j=0}^n x_j \cdot \frac{1}{x_j} 
      & \textnormal{ if } i = 0\\
      0 & \textnormal{ otherwise. }
    \end{cases}
  \]
  The single non-trivial generator corresponds, of course, to the unit $1 \in S$ of the homogeneous coordinate ring 
  which restricts to the given expressions in the standard affine charts. 
  We have chosen $e_{\min}(0) = (0, \dots, 0)$, as this is the minimal exponent vector which captures all cohomology for 
  the module $S^1$. 

  For the truncated Koszul complex $K^\bullet$
  the cohomology of the free modules $\bigwedge^p S[-1]$ 
  all vanish except for $p = n+1$ where we again find only a single cohomology group. Then 
  \[
    \mathbb W^i(K^\bullet) = 
    \begin{cases}
      H^n(\PP^n, \OO(-n-1)) = \check H^n_{\leq (1, \dots, 1)}(S[-n-1]) = \QQ \cdot \frac{e_0\wedge \cdots \wedge e_n}{x_0\cdots x_n} & \textnormal{ if } i = 0 \\
      0 & \textnormal{ otherwise. }
    \end{cases}
  \]
  Here we denote the standard generators of the free module $S[-1]^{n+1}$ by $e_j$. 
  Note that here we need to choose $e_{\min}(-n-1) = (1,\dots, 1)$. 

  Since we expect the direct image to vanish, the induced map in the respective Weyman complexes 
  \[
    \check H^n_{\leq (1,\dots,1)}(S[-n-1]) = 
    \mathbb W^0(K^\bullet) \overset{\mathbb W(\theta)}{\longrightarrow} \mathbb W^0(C^\bullet) = 
    \check H^0_{\leq (1,\dots, 1)}(S)
  \]
  needs to be an isomorphism. 
  To see this explicitly, we track the mapping of the single generator $u = \frac{e_0 \wedge \cdots \wedge e_n}{x_0,\dots, x_n}$. 
  The contraction morphism takes it to 
  \[
    \sum_{0\leq j \leq n} (-1)^j x_j \cdot \frac{e_0 \wedge \cdots \wedge \widehat{e_j} \wedge \cdots \wedge e_n}{x_0 \cdots x_n}
    \in \check C^n_{\leq (1,\dots, 1)} \left(\bigwedge^{n} S[-1]^{n+1}\right)
  \]
  which is in the image of the \v Cech morphism with preimage 
  \[
    \bigoplus_{0\leq j \leq n} \frac{e_0 \wedge \cdots \wedge \widehat{e_j} \wedge \cdots \wedge e_n}{x_0 \cdots\widehat{x_j}\cdots x_n}
    \in \check C^{n-1}_{\leq (1,\dots, 1)} \left(\bigwedge^{n} S[-1]^{n+1}\right).
  \]
  Given that our cohomology groups are only one-dimensional, we may assume that the 
  homotopy $h$ has been chosen so that this is really the image of our element under $h$. 
  This eventually continues so that we obtain the collection of elements 
  \[
    \{v_{-k, k}\}_{k = 0}^n, \quad v_{-k, k} = \bigoplus_{0\leq i_0 < \dots < i_k \leq n} 
    \frac{e_{i_0}\wedge \cdots \wedge e_{i_k}}{x_{i_1} \cdots x_{i_k}} 
    \in \check C_{\leq (1, \dots, 1)}^k\left(\bigwedge^{k+1} S[-1]^{n+1}\right)_0
  \]
  according to the change of basis for the Weyman complex. 
  Note that for $k=n$ we find that $v_{-n, n} = u$. 
  These elements are taken to zero modules in the codomain except for $k = 1$.
  In this last case one has
  \[
    v_{-1, 1} = \bigoplus_{0\leq i \leq n} \frac{e_i}{x_i} \overset{\theta}{\mapsto} \bigoplus_{i=0}^n x_i \cdot \frac{1}{x_i} 
    \in \check C^0_{\leq (1,\dots, 1)}(S)_0.
  \]
  Canceling fractions it is now easy to see that the cohomology class of this element coincides with the one given by 
  $\bigoplus_{j=0}^n \frac{1}{x_j^0}$ under the natural map $\check C^0_{\leq (0, \dots, 0)}(S)_0 \to \check C_{\leq (1, \dots, 1)}^0(S)_0$. 
  The inverse of the induced map in cohomology is precisely the projection to the chosen standard cohomology model in the Weyman complex.
\end{example}

\section{Examples}

\subsection{Sturmfels' example}
\label{sec:SturmfelsExample}

In \cite[Example 2.1]{Sturmfels93} Sturmfels sets out to compute the 
$A$-resultant for the following support sets
\begin{equation}
  \label{eqn:SupportSetsSturmfelsExample}
  A_1 = \left\{
    (0, 0), (2, 2), (1, 3)
  \right\}, \qquad
  A_2 = \left\{
    (0, 0), (2, 0), (1, 2)
  \right\}, \qquad
  A_3 = \left\{
    (3, 0), (1, 1)
  \right\}
\end{equation}
Note that these support sets do not satisfy the criteria made in the 
introduction since the convex hull of $A_3$ is not full-dimensional. 
However, Proposition \ref{prp:BroaderApplicability} applies, so that we 
may use our machinery on this example.
A direct computation via elimination takes
34.29979 seconds with 11.589 GiB of memory required. 
The eliminant is 
\begin{eqnarray*}
  & & a_1^5 b_3^7 c_1^6 c_2 + 3 a_1^4 a_2 b_2^2 b_3^5 c_1^4 c_2^3 + 3 a_1^3 a_2^2 b_2^4 b_3^3 c_1^2 c_2^5 - 13 a_1^3 a_2 a_3 b_1^2 b_2 b_3^4 c_1^5 c_2^2 - 7 a_1^3 a_3^2 b_1 b_2^3 b_3^3 c_1^4 c_2^3\\
  & +& 6 a_1^2 a_2^3 b_1^3 b_2 b_3^3 c_1^4 c_2^3 + a_1^2 a_2^3 b_2^6 b_3 c_2^7 - a_1^2 a_2^2 a_3 b_1^2 b_2^3 b_3^2 c_1^3 c_2^4 + 5 a_1^2 a_2 a_3^2 b_1^4 b_3^3 c_1^6 c_2 - a_1^2 a_2 a_3^2 b_1 b_2^5 b_3 c_1^2 c_2^5 \\
  & +& 14 a_1^2 a_3^3 b_1^3 b_2^2 b_3^2 c_1^5 c_2^2 + a_1^2 a_3^3 b_2^7 c_1 c_2^6 - 2 a_1 a_2^4 b_1^3 b_2^3 b_3 c_1^2 c_2^5 - 5 a_1 a_2^3 a_3 b_1^5 b_3^2 c_1^5 c_2^2 + 2 a_1 a_2^2 a_3^2 b_1^4 b_2^2 b_3 c_1^4 c_2^3 \\
  &-& 2 a_1 a_2 a_3^3 b_1^3 b_2^4 c_1^3 c_2^4 - 7 a_1 a_3^4 b_1^5 b_2 b_3 c_1^6 c_2 + a_2^5 b_1^6 b_3 c_1^4 c_2^3 + a_2^2 a_3^3 b_1^6 b_2 c_1^5 c_2^2 + a_3^5 b_1^7 c_1^7
\end{eqnarray*}
where we write $a_1, a_2, a_3$ for the coefficients associated to the 
vertices of $A_1$, $b_1, b_2, b_3$ for those of $A_2$, and $c_1, c_2$ 
for $A_3$. The multiplicity of the $A$-resultant is equal to one in this example 
and thus the eliminant coincides with $\Delta_{A_1, A_2, A_3}$.
As usual, we denote the polynomial ring in these variables over $\QQ$ by $R$. 

The toric variety $X$ for the Minkowski sum of the convex hulls for these support sets 
has the rays 
\[
  \begin{pmatrix}
    -1\\ -2
  \end{pmatrix},
  \begin{pmatrix}
    -2\\ -1
  \end{pmatrix},
  \begin{pmatrix}
    -1\\ -1
  \end{pmatrix},
  \begin{pmatrix}
    2\\ -1
  \end{pmatrix},
  \begin{pmatrix}
    3\\ -1
  \end{pmatrix},
  \begin{pmatrix}
    0\\ 1
  \end{pmatrix},
  \begin{pmatrix}
    -1\\ 1
  \end{pmatrix},
  \begin{pmatrix}
    1\\ 2
  \end{pmatrix}.
\]
The corresponding variables of the Cox ring are graded by 
\begin{eqnarray*}
  & & x_1 \mapsto [1, 0, 0, 0, 0, 0],\quad
 x_2 \mapsto [0, 1, 0, 0, 0, 0],\quad
 x_3 \mapsto [0, 0, 1, 0, 0, 0],\quad
 x_4 \mapsto [0, 0, 0, 1, 0, 0],\\
  & & 
 x_5 \mapsto [0, 0, 0, 0, 1, 0],\quad
 x_6 \mapsto [0, 0, 1, 2, 2, 3],\quad
 x_7 \mapsto [-1, 1, 0, -1, -1, -2],\quad
 x_8 \mapsto [1, -1, 2, 2, 0, 1]
\end{eqnarray*}
where the grading takes values in the group $\ZZ^6$ of toric divisor 
classes on $X$. For the twist by $(1,\dots, 1)$ the Weyman complex 
has only two terms, both of which are of rank 15.
These are distributed according to the first page of the spectral sequence 
(\ref{eqn:TheSpectralSequence}) 
which reads 
\[
  \begin{array}{c|cccc}
    2 & R^{15} & R^{12} & 0 & 0 \\
    1 & 0 & 0 & R^2 & 0 \\
    0 & 0 & 0 & 0 & R^1\\
    \hline
    q/p & -3 & -2 & -1 & 0.
  \end{array}
\]
The representing matrix for the morphism between them takes the following form: 
\[
  \begin{pmatrix}
                     0&             0&             0&      0&      0&      0&      0&      0&      0&  -b_1&       0&       0&       0&  a_1&      0\\
                     0&             0&             0&      0&      0&      0&      0&      0&      0&       0&  -b_1&       0&       0&      0&  a_1\\
                     0&             0&             0&  c_1&      0&      0&      0&      0&      0&  -b_2&       0&       0&       0&      0&      0\\
                     0&   a_1 b_3&             0&  c_2&      0&      0&      0&      0&      0&       0&       0&  -b_1&       0&      0&      0\\
                     0&             0&             0&      0&  c_1&      0&      0&      0&      0&       0&  -b_2&       0&       0&      0&      0\\
                     0&  -a_2 b_1&   a_1 b_3&      0&  c_2&      0&      0&      0&      0&       0&       0&       0&  -b_1&      0&      0\\
    -a_1 b_3 c_1&             0&  -a_2 b_1&      0&      0&  c_1&      0&      0&      0&       0&       0&  -b_2&       0&      0&      0\\
                     0&  -a_3 b_1&             0&      0&      0&  c_2&      0&      0&      0&       0&       0&       0&       0&      0&      0\\
                     0&  -a_2 b_2&             0&      0&      0&      0&  c_1&      0&      0&       0&       0&       0&  -b_2&      0&      0\\
    -a_2 b_1 c_2&             0&  -a_3 b_1&      0&      0&      0&  c_2&      0&      0&  -b_3&       0&       0&       0&      0&      0\\
                     0&             0&  -a_2 b_2&      0&      0&      0&      0&  c_1&      0&       0&       0&       0&       0&      0&      0\\
     a_3 b_1 c_1&  -a_3 b_2&             0&      0&      0&      0&      0&  c_2&      0&       0&  -b_3&       0&       0&  a_2&      0\\
    -a_2 b_2 c_2&             0&  -a_3 b_2&      0&      0&      0&      0&      0&  c_1&       0&       0&       0&       0&      0&  a_2\\
                     0&             0&             0&      0&      0&      0&      0&      0&  c_2&       0&       0&  -b_3&       0&  a_3&      0\\
    -a_3 b_2 c_2&             0&             0&      0&      0&      0&      0&      0&      0&       0&       0&       0&  -b_3&      0&  a_3\\
  \end{pmatrix}.
\]
The determinant of this matrix does coincide with the resultant up to sign. One can observe how the degree of the entries 
of a column coincides with the number of steps in the staircase (\ref{eqn:StaircaseLifting}) which need to be taken 
to realize the corresponding block map; cf. \cite[Chapter 5, Theorem 5.5 (a)]{GelfandKapranovZelevinsky94}.
It takes 20.683419 seconds to arrive at this matrix with only 354.649 MiB of memory allocated. 
If we presume the cache for the Weyman functor to already be filled, the computation is carried out in less then 50 milliseconds. 

In \cite{Sturmfels93} Sturmfels uses a stable twist in order to compute 
the determinant of the complex of global sections. In our setup this amounts to twisting with 
the toric divisor class\footnote{Note that this is not a Cartier divisor.} 
of degree $[4, 7, 16, 12, 3, 2] \in \ZZ^6$. 
We obtain a three-term complex of global sections of the form 
\[
  \begin{xy}
    \xymatrix{
      0 \ar[r]& 
      R^{23} \ar[r] &
      R^{27} \ar[r] &
      R^{4} \ar[r] &
      0
    }
  \end{xy}
\]
but it seems impossible to come up with a stable twist to get only a two-term complex. 
The matrices for the maps are too big to be reproduced here; their entries have polynomial 
degree at most 1. 

The computation of the complex of global sections takes 15.552580 seconds with 167.628 MiB of allocated memory.
More than 95\% of this time is spent on solving linear programs on polytopes for the computation 
of $e_{\min}$ and collecting lattice points within such polytopes for monomial bases of strands of graded modules. 
With a filled cache for the Weyman functor the complex above is provided within 36 milliseconds. 
Computing the determinant of that complex takes another 8 milliseconds. 

\subsection{Parametric images of rational curves}
\label{sec:ParametricImages}

Suppose we are given an algebraic family of morphisms 
\[
  F \colon \PP^1 \times \CC \to \PP^2, \quad (x, t) \mapsto f_t(x) \in \PP^2
\]
i.e. $(f_t)_i(x) = \sum_{|\nu| = d} a_{i,\nu}(t)\cdot x^\nu, \quad i = 0, 1, 2$ for some fixed 
degree $d \in \NN$ and coefficients $a_{i, \nu}\in \CC[t]$. We obtain a family of images 
$Y \subset \PP^2 \times \CC \to \CC$ such that $Y_t = Y \cap \PP^2 \times \{t\} = f_t(\PP^1)$.
The challenge is to describe the implicit equation for $Y$. 

For this benchmark example, we choose the coefficients $a_{i, \nu}$ to depend affine linearly 
on $t$ with fixed, random coefficients. Our machine also works for generic coefficients, but we 
restrict ourselves to $1$-parameter families, so as to provide a setup in which elimination methods 
have a chance to finish in reasonable time. 

We build the ideal for the graph generated by 
\[
  (f_t)_0(x_0, x_1) \cdot u - (f_t)_1(x_0, x_1), \quad
  (f_t)_0(x_0, x_1) \cdot v - (f_t)_2(x_0, x_1), 
\]
where $(x_0, x_1)$ are the homogeneous coordinates for $\PP^1$ and $(u, v)$ are affine coordinates 
of the codomain $\PP^2$. To compute the equation for the image via elimination, we dehomogenize 
by setting $x_0 = 1$ and then eliminate $x_1$ from the ideal. For comparison, this is also done 
in positive characteristic over the field $\mathbb F_{101}$. 
When applying our machinery for $A$-resultants, we consider the above system over the variety 
$\PP^1\times \CC \times \CC^2$ with $X = \PP^1$ 
the toric variety arising from the ``triangular'' support 
sets $A_i = \{(0), \dots, (d)\}$. 

\begin{table}
  \begin{center}
    \begin{tabular}{|l|l|l|l|l|l|l|l|l|}
      \hline
      Deg. & \multicolumn{4}{l}{stable twist} & 
      \multicolumn{4}{l}{optimized twist}\\
      \hline
      $d$ & \multicolumn{2}{l}{direct image} & \multicolumn{2}{l}{determinant} & 
      \multicolumn{2}{l}{direct image} & \multicolumn{2}{l}{determinant} \\
      \hline
      1 & 
      0.257670 s & 4.785 MiB &
      %[0, 3, 4, 1, 0]
      0.001632 s & 48.234 KiB &
      %optimal twist
      0.241543 s & 3.335 MiB & 
      %[0, 1, 1, 0, 0]
      0.000125 s & 8.328 KiB \\ 
      2 & 
      0.219085 s & 3.831 MiB &
      %[0, 5, 6, 1, 0]
      0.002397 s & 117.418 KiB & 
      %optimal twist
      0.331929 s & 3.007 MiB & 
      %[0, 2, 2, 0, 0]
      0.001529 s & 19.672 KiB \\ 
      3 & 
      0.217710 s & 4.473 MiB &
      %[0, 7, 8, 1, 0]
      0.002790 s & 391.711 KiB & 
      %optimal twist
      0.323865 s & 3.665 MiB & 
      %[0, 3, 3, 0, 0]
      0.001819 s & 51.633 KiB \\ 
      4 & 
      0.215691 s & 5.185 MiB &
      %[0, 9, 10, 1, 0]
      0.006528 s & 1.040 MiB & 
      %optimal twist
      0.318101 s & 4.668 MiB & 
      %[0, 5, 5, 0, 0]
      0.002821 s & 276.250 KiB \\ 
      5 & 
      0.218553 s & 5.951 MiB &
      %[0, 11, 12, 1, 0]
      0.022298 s & 3.142 MiB & 
      %optimal twist
      0.327058 s & 5.571 MiB & 
      %[0, 6, 6, 0, 0]
      0.007956 s & 673.281 KiB \\ 
      6 & 
      0.217042 s & 6.774 MiB &
      %[0, 13, 14, 1, 0]
      0.152825 s & 7.375 MiB & 
      %optimal twist
      0.323507 s & 6.570 MiB & 
      %[0, 7, 7, 0, 0]
      0.026254 s & 1.434 MiB \\ 
      7 & 
      0.222800 s & 7.672 MiB &
      %[0, 15, 16, 1, 0]
      0.542230 s & 15.826 MiB & 
      %optimal twist
      0.321827 s & 8.220 MiB & 
      %[0, 9, 9, 0, 0]
      0.169449 s & 4.852 MiB \\ 
      8 & 
      0.223987 s & 8.792 MiB &
      %[0, 17, 18, 1, 0]
      1.536745 s & 30.783 MiB & 
      %optimal twist
      0.325753 s & 9.468 MiB & 
      %[0, 10, 10, 0, 0]
      0.399394 s & 8.495 MiB \\ 
      9 & 
      0.223444 s & 9.880 MiB &
      %[0, 19, 20, 1, 0]
      3.555785 s & 57.665 MiB & 
      %optimal twist
      0.335605 s & 10.943 MiB & 
      %[0, 11, 11, 0, 0]
      0.936593 s & 15.404 MiB \\ 
      10 & 
      0.227206 s & 11.002 MiB &
      %[0, 21, 22, 1, 0]
      7.591414 s & 207.069 MiB & 
      %optimal twist
      0.333723 s & 12.779 MiB & 
      %[0, 13, 13, 0, 0]
      3.363516 s & 37.346 MiB \\ 
      11 & 
      0.236053 s & 12.193 MiB &
      %[0, 23, 24, 1, 0]
      13.167125 s & 815.313 MiB & 
      %optimal twist
      0.334076 s & 14.486 MiB & 
      %[0, 14, 14, 0, 0]
      6.450076 s & 137.334 MiB \\ 
      12 & 
      0.233032 s & 13.355 MiB &
      %[0, 25, 26, 1, 0]
      22.796134 s & 1.784 GiB & 
      %optimal twist
      0.340848 s & 16.242 MiB & 
      %[0, 15, 15, 0, 0]
      9.609917 s & 318.037 MiB \\ 
      13 & 
      0.228131 s & 14.709 MiB &
      %[0, 27, 28, 1, 0]
      42.269698 s & 3.539 GiB & 
      %optimal twist
      0.357773 s & 18.747 MiB & 
      %[0, 17, 17, 0, 0]
      21.653534 s & 1.193 GiB \\ 
      14 & 
      0.237664 s & 16.059 MiB &
      %[0, 29, 30, 1, 0]
      72.839881 s & 5.543 GiB & 
      %optimal twist
      0.345428 s & 20.900 MiB & 
      %[0, 18, 18, 0, 0]
      36.186837 s & 2.204 GiB \\ 
      15 & 
      0.235851 s & 17.465 MiB &
      %[0, 31, 32, 1, 0]
      117.974539 s & 9.468 GiB & 
      %optimal twist
      0.351999 s & 23.178 MiB & 
      %[0, 19, 19, 0, 0]
      62.510157 s & 3.780 GiB \\ 
      \hline
    \end{tabular}
  \end{center}
  \caption{Timings and memory consumption for the computation of the parametric image of a random family of rational curves}
  \label{tab:TimingsRationalCurve}
\end{table}

\begin{table}
  \begin{center}
    \begin{tabular}{|l|l|l|l|l|l|l|l|l|}
      \hline
      Deg. & \multicolumn{2}{l}{char. $0$} & 
      \multicolumn{2}{l}{char. $101$}\\
      \hline
      1 & 0.005592 s &  18.758 KiB &
      0.005427 s &  19.484 KiB \\
      2 & 0.001284 s &  101.648 KiB &
      0.005597 s &  22.164 KiB \\
      3 & 0.013487 s &  17.864 MiB &
      0.002422 s &  22.930 KiB \\
      4 & 2.974198 s &  5.978 GiB &
      0.021399 s &  27.523 KiB \\
      5 & -- & -- &
      0.272273 s &  31.352 KiB \\
      6 & -- & -- &
      6.796502 s &  33.266 KiB \\
      7 & -- & -- &
      -- & -- \\
      \hline
    \end{tabular}
  \end{center}
  \caption{Timings and memory consumption for the computation of the parametric image of a random family of rational curves via elimination in characteritic $0$ and $101$}
  \label{tab:TimingsRationalCurveElimination}
\end{table}

In Table \ref{tab:TimingsRationalCurveElimination} we list the timings for computation 
of the implicit equation of the family via elimination. Fields without entry indicate that 
the computation was aborted after several minutes without result. While we obtain 
rather quick results for degrees $\leq 4$, the complexity 
seems to explode with increasing degree; even though we eliminate but one variable out 
of only four in total.

We see from Table \ref{tab:TimingsRationalCurve} that the computation of the 
direct image complex remains relatively cheap with increasing degree $d$, whereas 
the computation of the determinant of said complex seems to have exponentially growing 
complexity. While the direct image for a stably twisted complex is cheaper, the size 
of the matrices to be considered for the determinant is bigger compared to a more optimal 
twisting. This pays off in the subsequent computation of determinants. Indeed, for $d = 15$ 
the ranks of the non-zero modules in the direct image for the stably twisted complex are 
$31$, $32$, and $1$, while a twist of the Koszul complex 
by $\lfloor \frac{2d}{3}\rfloor$ gives $19$ and $19$. 

\subsection{A resultant with higher multiplicity in three variables}

The support sets chosen for this example are 
\begin{eqnarray*}
  A_0 = \{(0, 0, 0), (0, 2, 4), (-2, 5, 8)\},& \quad& 
  A_1 = \{(-2, 4, 6), (1, 0, 1), (4, -4, -4)\}, \\
  A_2 = \{(3, -3, -3), (0, 1, 2)\}, &\quad& 
  A_3 = \{(0, 0, 0), (2, -4, -4)\}
\end{eqnarray*}
in $\ZZ^3$ and the multiplicity (see Section \ref{sec:BroadeningScope}) of the 
$A$-resultant over the eliminant is $14$. Computing the 
direct image for the application of Equation (\ref{eqn:DeterminantOfWeymanComplex}) 
takes us 1824.227613 seconds with 1014.679 GiB of allocated memory. This reduces to 
0.169954 seconds and 32.308 MiB of allocated memory with the cache for 
the Weyman functor already filled. Computation of the determinant of that 
complex then takes only another 0.050967 seconds and 7.287 MiB of allocations 
and we arrive at a polynomial 
$\Delta \in R = \QQ[a_{i, \nu} : i = 0, \dots, 3, \, \nu \in A_i]$ with 120 terms. 
The $14$-th root of $\Delta$ exists and is equal to 
\[
  h = a_{1, (-2, 4, 6)}\cdot a_{2, (3, -3, -3)}^2 - a_{1, (1, 0, 1)} \cdot a_{2, (3, -3, -3)} \cdot a_{2, (0, 1, 2)} + a_{1, (4, -4, -4)} \cdot a_{2, (0, 1, 2)}^2.
  %a_2_1\cdot a_3_1^2 - a_2_2 \cdot a_3_1 \cdot a_3_2 + a_2_3 \cdat a_3_2^2
\]
Interestingly this does not involve the coefficients of neither $A_0$, nor $A_3$. 

The toric variety associated to this problem has seven rays. 
On the first page of the spectral sequence (\ref{eqn:TheSpectralSequence})
one finds the following free modules 
\[
  \begin{array}{c|ccccc}
    3 & R^{19}& R^{20}&  R^{1}& 0& 0\\
    2 & 0& R^{21}& R^{21}& R^{1}& 0\\
    1 & 0&  0&  R^{2}& R^{2}& 0\\
    0 & 0&  0&  0& 0& R^{1}\\
    \hline
    q/p & -4 & -3 & -2 & -1 & 0.
  \end{array}
\]
This setting provides ample opportunity for non-trivial ``knight-move'' maps in 
the spectral sequence and the Weyman complex. Indeed, we find that the block maps 
\[
  \phi_{-2, 1}^2 \colon R^{2} \to R^1, \textnormal{ and }
  \phi_{-3, 2}^2 \colon R^{21} \to R^2
\]
are non-zero while 
\[
  \phi_{-3, 3}^2 \colon R^{20} \to R^1, \quad 
  \phi_{-3, 2}^3 \colon R^{21} \to R^1, \textnormal{ and } 
  \phi_{-4, 3}^r \textnormal{ for } r > 0
\]
all vanish.

\subsection{A random family of support sets}
To provide one more example where the toric variety $X$ is not a projective space, 
consider the following family of support sets for $k > 0$.
\begin{equation}
  \label{eqn:ScalableSupportSets}
  A_1 = \{(0, 0), (0, 3\cdot k), (3, 2\cdot k)\}, \quad
  A_2 = \{
    (1, 1), (2, 2), (k, 2\cdot k)
  \}, \quad
  A_3 = \{
    (-1, 5), (-2, 2), (3, 0)
  \}
% A_1 = 
% \begin{bmatrix}
%   0 &0\\ 0 &3\cdot k\\ 3 &2\cdot k
% \end{bmatrix}, \quad
% A_2 = \begin{bmatrix}
%   1 &1\\ 2 &2\\ k &2\cdot k
% \end{bmatrix}, \quad
% A_3 = \begin{bmatrix}
%   -1 &5\\ -2 &2\\ 3 &0
% \end{bmatrix}
\end{equation}
For every $k$ we twist the Koszul complex on the toric variety $X$ by 
$-2\cdot K_X$, a multiple of the canonical divisor. The timings for the computation of 
the direct images with this twist can be found in Table \ref{tab:TimingsScalableSupportSets}.
For $k=8$ the toric variety has $9$ rays and the ranks of the non-zero modules in 
the direct image are $8$, $364$, and $356$. Again, the matrices are too big to compute 
the full determinant of such complexes; but the description of the discriminant 
as the degeneracy locus of such a complex is often preferable over a single equation, 
anyway. 
\begin{table}
  \begin{center}
    \begin{tabular}{|l||l|l|l|l|l|l|l|l|}
      \hline
      $k$ & 1 & 2 & 3 & 4 & 5 & 6 & 7 & 8 \\
      \hline 
      time ($s$) & 
      31.966781 &
      34.338416 &
      77.535535 &
      86.226101 &
      99.094804 &
      124.743093 &
      189.448820 &
      257.901875 \\

      memory & 
      1.203 GiB &
      2.941 GiB &
      7.910 GiB &
      9.796 GiB &
      12.000 GiB &
      31.474 GiB &
      31.116 GiB &
      57.223 GiB \\

      \hline
    \end{tabular}
  \end{center}
  \caption{Timings and memory consumption for the computation of the direct image for the support sets in (\ref{eqn:ScalableSupportSets})}
  \label{tab:TimingsScalableSupportSets}
\end{table}

\printbibliography
\section{Appendix}
%\begin{table}[h]
  \noindent\begin{minipage}{\textwidth}
  \begin{center}
    \begin{tabular}{|l|l|l|l|l|l|l|l|}
      \hline
      Deg. & \multicolumn{2}{l}{Weyman complex} & 
      \multicolumn{2}{l}{W.C. with filled cache}& 
      global 
      & \multicolumn{2}{l}{total complex} \\
      \hline
      $d$ & time ($s$) & memory & time ($s$) & memory &
      $e_{\min}$ 
      & time ($s$) & memory \\
      \hline
      1     & 0.349505  & 2.183 MiB       &  0.000114    & 128.391 KiB  &   1     &  0.847143    & 27.873 MiB \\
      2     & 0.978917  & 9.686 MiB       &   0.001795   & 1.278 MiB    &     2   &   0.867675   & 35.968 MiB \\
      3     & 1.025479  & 16.348 MiB      &  0.002640    & 1.956 MiB    &    2    &   0.912313   &  50.881 MiB \\
      4     & 1.269116  & 31.889 MiB      &  0.008439    & 4.876 MiB    &   4     &  0.963399    & 97.582 MiB \\
      5     & 1.047233  & 52.978 MiB      &    0.010910  & 9.446 MiB    &   6     &  1.264132    & 179.033 MiB \\
      6     & 1.043226  & 70.532 MiB      &    0.014172  & 12.193 MiB   &   6     &  1.334998    & 245.344 MiB \\
      7     & 1.107916  & 110.535 MiB     &   0.024167   & 20.515 MiB   &   8     &  1.731219    & 388.416 MiB \\
      8     & 1.167850  & 170.018 MiB     &   0.321515   & 35.414 MiB   &   10    &  2.100965    & 642.834 MiB \\
      9     & 1.915721  & 185.126 MiB     &   0.041010   & 33.428 MiB   &   10    &  2.265868    & 680.995 MiB \\
      10    & 1.592025  & 244.932 MiB     &   0.065679   & 46.465 MiB   &     12  &    3.000391  & 893.446 MiB \\
      11    & 1.741202  & 331.905 MiB     &   0.090125   & 62.450 MiB   &     14  &    3.995461  & 1.127 GiB \\
      12    & 2.593362  & 380.803 MiB     &   0.105257   & 70.451 MiB   &     14  &    4.065723  & 1.397 GiB \\
      13    & 1.915596  & 483.991 MiB     &  0.110633    & 92.036 MiB   &     16  &    6.082456  & 1.808 GiB \\
      14    & 2.044550  & 585.171 MiB     &  0.133449    & 110.303 MiB  &     18  &    7.288717  & 2.145 GiB \\
      15    & 2.107704  & 634.886 MiB     &  0.138961    & 114.216 MiB  &     18  &    7.129719  & 2.278 GiB \\
      16    & 2.932485  & 813.741 MiB     &  0.198831    & 141.760 MiB  &     20  &    8.341343  & 2.677 GiB \\
      17    & 3.036552  & 896.070 MiB     &  0.175430    & 148.308 MiB  &     22  &    8.835953  & 2.686 GiB \\
      18    & 2.745122  & 1.031 GiB       &  1.019911    & 179.156 MiB  &     22   &    12.586285 &  3.475 GiB \\
      19    & 2.983508  & 1.213 GiB       &  0.242226    & 206.849 MiB  &     24   &    14.197863 &  3.978 GiB \\
      20    & 4.185056  & 1.485 GiB       &  0.308577    & 253.038 MiB  &     26   &    17.783226 &  4.698 GiB \\
      21    & 4.178524  & 1.583 GiB       &  0.316916    & 256.112 MiB  &     26   &    20.773759 &  4.997 GiB \\
      22    & 4.794206  & 1.845 GiB       &  0.363771    & 289.298 MiB  &     28   &    19.302929 &  5.195 GiB \\
      23    & 7.529512  & 2.122 GiB       &  0.560900    & 318.048 MiB  &     30   &    25.241289 &  5.822 GiB \\
      24    & 6.441480  & 2.317 GiB       &  0.493041    & 350.728 MiB  &     30   &    30.402498 &  6.466 GiB \\
      25    & 6.414402  & 2.586 GiB       &  0.460874    & 374.417 MiB  &     32   &    35.474589 &  6.936 GiB \\
      26    & 7.615630  & 3.038 GiB       &  0.513239    & 429.768 MiB  &     34   &    44.055398 &  7.746 GiB \\
      27    & 6.950785  & 3.233 GiB       &  1.766022    & 438.364 MiB  &     34   &    48.122962 &  7.607 GiB \\
      28    & 8.184260  & 3.664 GiB       &   0.613829   & 499.062 MiB  &    36    &   59.547643  & 8.632 GiB \\
      29    & 8.525542  & 4.204 GiB       &   1.882132   & 572.907 MiB  &    38   &   73.467864  & 10.052 GiB \\
      30    & 9.861769  & 4.516 GiB       &   1.853813   & 586.525 MiB  &    38   &   75.614470  & 10.041 GiB \\
      31    & 11.139724 &  5.175 GiB      &  1.882798    & 650.793 MiB  &    40   &   94.480961  & 11.037 GiB \\
      32    & 11.441082 &  5.709 GiB      &  2.365171    & 713.414 MiB  &    42   &   110.174999 &  11.716 GiB \\
      33    & 12.987692 &  6.281 GiB      &  2.202816    & 771.769 MiB  &    42   &   119.582474 &  13.299 GiB \\
      34    & 13.529525 &  7.008 GiB      &  2.413210    & 821.759 MiB  &    44   &   142.311629 &  13.677 GiB \\
      35    & 16.650439 &  7.839 GiB      &  2.804185    & 913.502 MiB  &    46   &   160.843573 &  15.075 GiB \\
      36    & 16.500480 &  8.230 GiB      &  2.472642    & 902.511 MiB  &    46   &   173.413111 &  14.439 GiB \\
      37    & 17.738655 &  9.073 GiB      &  2.734722    & 1015.709 MiB &    48   &   195.035378 &  15.943 GiB \\
      38    & 19.001251 &  9.882 GiB      &  2.515698    & 1011.516 MiB &    50   &   216.408133 &  15.113 GiB \\
      39    & 19.978116 &  10.766 GiB     & 2.879243     & 1.111 GiB    &    50   &   245.959172 &  17.800 GiB \\
      40    & 22.775290 &  12.001 GiB     & 3.752681     & 1.250 GiB    &    52   &   277.660898 &  18.741 GiB \\
      \hline
    \end{tabular}
  \end{center}
    \captionof{table}{Timings and memory consumption for the computation of the morphism in degree zero 
    of the direct images 
    from Example \ref{exp:CotantengSheafExample} for $n = 2$}
  \label{tab:TimingsCotangentSheafExample}
\end{minipage}
%\end{table}

\end{document}